\newtheorem*{notn}{Notation}
\newtheorem{thm}{Theorem}[section]
\newtheorem{lem}[thm]{Lemma}
\newtheorem{cor}[thm]{Corollary}
\newtheorem{Q}[thm]{Question}
\newtheorem{Def}[thm]{Definition}
\newtheorem{prop}[thm]{Proposition}
\newtheorem{rem}[thm]{Remark}
\newcommand{\bdfn}{\begin{Def} \rm}
\newcommand{\edfn}{\end{Def}}
\newcommand{\tFAE}{the following are equivalent}
\newcommand{\ra}{\rightarrow}
\newcommand{\Ra}{\Rightarrow}
\newcommand{\Lglra}{\Longleftrightarrow}
\newcommand{\x}{\mathnormal{x}}
\newcommand{\ci}{\subseteq}
\newcommand{\al}{\alpha}
\newcommand{\de}{\delta}
\newcommand{\e}{\varepsilon}
\newcommand{\ka}{\kappa}
\newcommand{\la}{\lambda}
\newcommand{\mb}{\mathbb}
\newcommand{\iy}{\infty}
\newcommand{\beqa}{\begin{eqnarray*}}
\newcommand{\eeqa}{\end{eqnarray*}}
\newcounter{cnt1}
\newcounter{cnt2}
\newcounter{cnt3}
\newcounter{cnt4}
\newcommand{\blr}{\begin{list}{$($\roman{cnt1}$)$} {\usecounter{cnt1}
 \setlength{\topsep}{0pt} \setlength{\itemsep}{0pt}}}
\newcommand{\blR}{\begin{list}{\Roman{cnt4}.\ } {\usecounter{cnt4}
 \setlength{\topsep}{0pt} \setlength{\itemsep}{0pt}}}
\newcommand{\bla}{\begin{list}{$(\alph{cnt2})$} {\usecounter{cnt2}
 \setlength{\topsep}{5pt} \setlength{\itemsep}{5pt}}}
 \newcommand{\blaa}{\begin{list}{$(\alph{cnt2}')$} {\usecounter{cnt2}
 \setlength{\topsep}{5pt} \setlength{\itemsep}{5pt}}}
\newcommand{\bln}{\begin{list}{$($\arabic{cnt3}$)$} {\usecounter{cnt3}
 \setlength{\topsep}{0pt} \setlength{\itemsep}{0pt}}}
\newcommand{\el}{\end{list}}
\begin{document}

\title[Extensions of $p$-compact operators]{Extensions of $p$-compact operators in Banach spaces}

\author[Karak]{Sainik Karak}
\author[Paul]{Tanmoy Paul}
%{\thanks{$^{*}$Corresponding Author}}
\address{Department of Mathematics,
 Indian Institute of Technology, Hyderabad, India}

\email{\tt ma22resch11001@iith.ac.in \& tanmoy@math.iith.ac.in}

\subjclass[2000]{Primary 46B20, 46B10, 46B25.}

\keywords{Compact operators, $p$-compact operators, $p$-approximation property, $K_p(X,Y), K_p^d(X,Y)$. \hfill \textbf{\today}}

\begin{abstract}
We analyze various consequences in relation to the extension of operators $T:X\to Y$ that are $p$-compact, as well as the extension of operators $T:X\to Y$ whose adjoints $T^*:Y^*\to X^*$ are $p$-compact. In most cases, we discuss these extension properties when the underlying spaces, either the domain or codomain, are $P_\lambda$ spaces. We also address whether these extensions are almost norm-preserving in such circumstances where the extension $\widetilde{T}$ of $T$ exists.  It is observed that an operator can often be extended to a larger domain when the codomain is appropriately extended as well. Specific assumptions might enable us to obtain an extension of an operator that maintains the same range. In this context, both necessary and sufficient conditions are established for a Banach space to qualify as a $L_1$-predual.
\end{abstract}

\maketitle

\section{Introduction}
\subsection{Objectives}

In this paper, we address the following questions. We refer to the next section for the necessary definitions of any terms not explained here.

\begin{Q}\label{Q1}
\bla
Let $X$ be a Banach space and $1\leq p<\iy$.
\item Let $T:X\to Y$ be a $p$-compact (weakly $p$-compact) operator and $Z\supseteq X$. Does a $p$-compact (weakly $p$-compact) extension $\widetilde{T}:Z\ra Y$ exist such that $\ka_p(\widetilde{T})=\ka_p(T)$ ($\omega_p(\widetilde{T})\leq\omega_p(T)$)? 
\item Let $T:X\to Y$ be a bounded linear operator, and $Z\supseteq X$. Assume that $T^*:Y^*\ra X^*$ is $p$-compact. Does an extension $\widetilde{T}:Z\ra Y$ exist such that $\widetilde{T}^*:Y^*\ra Z^*$ is $p$-compact and $\ka_p^d(\widetilde{T})=\ka_p^d(T)$? 
\el
\end{Q}

\subsection{Preliminaries}

We introduce the following notations, which are required to define the central theme of this article. Here $X$ denotes a complex Banach space and $(x_n)$ represents a sequence in $X$. $B_X$ and $S_X$ represent the closed unit ball and the unit sphere of $X$, respectively. $B(X,Y)$ and $K(X,Y)$ represent the space of all bounded and compact linear operators from $X$ to $Y$, respectively. $F(X,Y)$ represents the set of all finite-rank linear operators from $X$ to $Y$.

\begin{notn}
\bla 
\item Define $\ell_p^s(X)=\{(x_n)\in \oplus_{n=1}^\iy X:\sum_n\|x_n\|^p<\iy\}$, for $1\leq p<\iy$.
\item Define $\ell_p^w(X)=\{(x_n)\in\oplus_{n=1}^\iy X: \sum_n|x^*(x_n)|^p<\iy, x^*\in X^* \}$, for $1\leq p<\iy$. 
\el
\end{notn}

When $(x_n)\in \ell_p^s(X)$ ($(x_n)\in\ell_p^w(X)$), we define the norms,
\[\|(x_n)\|_p^s=\Big(\sum_{n=1}^\iy\|\x_n\|^p\Big)^{\frac{1}{p}} \mbox{~and~}
\]
\[
\|(x_n)\|_p^w=\sup\{\Big(\sum_n|x^*(x_n)|^p\Big)^{\frac{1}{p}}:x^*\in B_{X^*}\}  
\]
respectively, such that $(\ell_p^s(X),\|.\|_p^s)$ and $(\ell_p^w(X),\|.\|_p^w)$ form complete normed linear spaces.
For a given $x=(x_n)\in\ell_p^s(X)$ (or $(x_n)\in\ell_p^w(X)$) one can define $E_x:\ell_q\ra X$, a bounded linear operator by $E_x (\al_n)=\sum_n\al_nx_n$. With this identification $x\mapsto E_x$, $\ell_p^w(X)\cong B(\ell_q,X)$, $\frac{1}{p}+\frac{1}{q}=1$ and $\ell_1^w(X)\cong B(c_0,X)$ (see \cite{JD}). It is clear that $\ell_p^s(X)\ci K(\ell_q,X)$, which justifies $\ell_p^s(X)\ci \ell_p^w(X)$. We refer the reader to \cite[p.34]{JD} for more details on these identifications.

Alexander Grothendieck has made it well-known that a relatively compact set in a Banach space can be found in the convex hull of a null sequence (see \cite[p.112]{AG1}).
Motivated by Grothendieck's result, Karn and Sinha introduced the notion of a (weakly) $p$-compact set for $1\leq p\leq\iy$.

\bdfn\label{D1}
Let $K\ci X$. For $x=(x_n)$, consider $E_x:\ell_q\ra X$ as stated above.
\bla
\item $K$ is said to be {\it relatively $p$-compact}, $1\leq p\leq \iy$, if there exists $x=(x_n)\in\ell_p^s(X)(\,1\leq p<\iy\,)\;(\,x\in c_0^s(X)$ if $p=\iy\,)$ such that $K\ci E_x(B_{\ell_q})$.
\item $K$ is said to be {\it relatively weakly $p$-compact}, $1\leq p\leq \iy$, if there exists $x=(x_n)\in\ell_p^w(X)(\,1\leq p<\iy\,)\;(\,x\in c_0^w(X)$ if $p=\iy\,)$ such that $K\ci E_x(B_{\ell_q})$. 
\el
\edfn

According to this description, $\iy$-compact sets are precisely the compact sets. Moreover, every $p$-compact set is $q$-compact whenever $1\leq p<q\leq\iy$; however, in general, $q$-compact sets are not necessarily $p$-compact. We also note that $c_0^s(X)=c_0^w(X)$ when $X=\ell_1$. By contrast, for $1\leq p<\iy$, $\ell_p^s(X)\subsetneqq \ell_p^w(X)$, whenever $X$ is infinite dimensional and vice versa.
One can now generalize the notion of the $p$-compact operator in the following sense. 

\bdfn\label{D2}
For Banach spaces $X, Y$ and $1\leq p\leq\iy$, an operator $T\in B(X, Y)$ is called $p$-compact (weakly $p$-compact) if $T$ maps bounded subsets of $X$ to relatively $p$-compact (weakly $p$-compact) subsets of $Y$. In other words, there exists $y\in\ell_p^s(Y)$ ($y\in \ell_p^w (Y)$) for $p<\iy$ (for $p=\iy$, $y\in c_0^s(Y)$) such that $T(B_X)\ci E_y(B_{\ell_q})$, where $\frac{1}{p}+\frac{1}{q}=1$. 
\edfn

\begin{notn}
For Banach spaces $X, Y$, we define the following:
\bla
\item $K_p(X,Y)=\{T\in B(X,Y): T \mbox{~is~}p-\mbox{compact}\}$.
\item $W_p(X,Y)=\{T\in B(X,Y): T \mbox{~is weakly~}p- \mbox{compact}\}$.
\el
\end{notn}

$K_p(X,Y)$ and $W_p(X,Y)$ are Banach operator ideals with respect to some suitable norms $\ka_p$ and $\omega_p$ respectively. For a given operator $T$, $\ka_p(T) (\omega _p(T))$ depends on the factorization of the operator $T$ through a quotient space of $\ell_q$, $\frac{1}{p}+\frac{1}{q}=1$.
Moreover, if $(A,\al)$ is an operator ideal for Banach spaces, one can define $A^d(X,Y)=\{T\in B(X,Y):T^*\in A(Y^*,X^*)\}$. For $T\in A^d(X,Y)$, we define $\al^d(T)=\al(T^*)$. Then $(A^d,\al^d)$ is again an operator ideal and is called the {\it dual ideal} of $(A,\al)$. It is well-known that $A^d$ is a Banach operator ideal whenever $(A,\al)$ is also a Banach operator ideal. In this paper, we discuss various extension properties of the dual ideal.

\begin{notn}
For Banach spaces $X, Y$, we define the following:
\[
K_p^d(X,Y)=\{T\in K_p(X,Y):T^*\in K_p(Y^*,X^*)\}.
\]
\end{notn}

Interested readers can refer to \cite{SK,SK1} for more details on these ideals.
We now move on to the notion of {\it approximation property} in Banach spaces.

\bdfn
A Banach space $X$ is said to have the {\it metric approximation property} (MAP) if for every compact subset $K$ of $X$ and for $\e>0$ there exists an operator $T$ with a finite-dimensional range from $X$ into itself such that $\|T\|=1$ and $\|Tx-x\|\leq \e$ for every $x\in K$.
\edfn

Hence if $K\ci X$ is compact, then $p_K:B(X,Y)\ra \mb{R}$ defines a seminorm, where $p_K(T)=\sup_{x\in K} \|Tx\|$. Thus, if $\tau$ represents the topology induced by the seminorms $\{p_K:K\ci X \mbox{~compact}\}$ then the identity on $X$, $I\in \overline{F(X)}^\tau$.

We now turn our focus towards the extension properties of Banach spaces.

\bdfn\cite{MD}\label{D3}
\bla
\item A Banach space $X$ is said to be a {\it $P_\la$-space}, for some $\la\geq 1$, if for any Banach space $Z\supseteq X$ (with $X$ as a subspace) there exists a projection $P:Z\ra X$, with $\|P\|\leq \la$. 
\item A Banach space $X$ is said to be injective if for any Banach space $Z$ and any subspace $Y$ of $Z$, every  bounded linear operator $T:Y\ra X$ admits an extension $\widetilde{T}:Z\ra X$ such that $\|T\|=\|\widetilde{T}\|$.
\el
\edfn

It is well known that the spaces $P_1$ are injective Banach spaces, and vice versa.
In \cite[p.94]{MD} the author discusses the $P_\la$ spaces and demonstrates that these spaces provide Hahn-Banach-type extensions for linear operators in Banach spaces. The case $\la=1$ is of particular interest: the family of $P_1$ spaces also known as {\it Banach spaces with the extension property} \cite[p.2]{JL}. Lindenstrauss initiated this investigation systematically in his memoir \cite{JL}. It is widely recognized that real $P_1$-spaces are those Banach spaces that are of the form $C_{\mb{R}}(\Omega)$ for some Stonean space  $\Omega$ (see \cite{DG, JK, LN}). In \cite[Theorem~2]{HA} Hasumi observed that a complex Banach space has the extension property if and only if it is isometric to $C(\Omega)$, for some Stonean space $\Omega$.

 Grothendieck \cite{AG} showed that if $X$ is a real Banach space, then $X^*$ is isometric to an $L_1$-space if and only if $X^{**}$ is a $P_1$ space. The results of Sakai (see \cite{SS}) show that this theorem is also valid for complex Banach spaces.

\bdfn\label{D4}
A Banach space $X$ is said to be an {\it $L_1$-predual} if $X^*\cong L_1(\mu)$ for some measure space $(S,\Sigma,\mu)$.
\edfn

The class of Banach spaces whose duals are $L_1$-spaces is a well-studied object in functional analysis. We refer to Chapters 6 and 7 of Lacey's monograph \cite{HL} for characterizations of these spaces and their properties.
All such spaces with real scalars can be characterized by intersection properties of closed balls (see \cite[p.212]{HL}). However, these intersection properties may fail for complex $L_1$-predual spaces. Complex Banach spaces that are $L_1$-preduals are {\it $E$-spaces} and vice versa (see \cite[Theorem~4.9]{HU}). It is well-known that every $L_1$-predual has the MAP.

Recall a result by Lindenstrauss, stated in \cite[Theorem~2.1]{JL}. The result was derived for real scalars, however, similar observations are also valid for complex scalars. In the subsequent sections, we assume that \cite[Theorem~2.1]{JL} holds for complex scalars.

Note that in finite-dimensional spaces, every compact set is $p$-compact, for $1\leq p\leq\iy$. 
Moreover, if $T$ is a finite rank operator between Banach spaces $X$ and $Y$, then $T=\sum_{i=1}^n x_i^*\otimes y_i$, for some $x_i^*\in X^*$ and $y_i\in Y$. Then $T(B_X)\ci E_v(B_{\ell_1(n)})$, for a suitable $v=(v_i)_{i=1}^n$, $v_i\in Y$. This implies that $T$ is a $p$-compact operator for $1\leq p\leq \iy$.

\bdfn\cite{JD}
Suppose that $1\leq p<\iy$ and that $T: X\ra Y$ is a linear operator between Banach spaces. We say that $T$ is $p$-summing if there exists a constant $c\geq 0$ such that for $m\in\mb{N}$ and for every choice of $x_1,...,x_m$ in $X$ we have $\Big(\sum_{i=1}^m\|Tx_i\|^p\Big)^\frac{1}{p}\leq c.\sup\Big\{\Big(\sum_{i=1}^m | x^*(x_i)|^p\Big)^\frac{1}{p}:x^*\in B_{X^*}\Big\}$.
\edfn

The least $c$ for which this inequality always holds is denoted by $\pi_p(T)$.

\begin{notn}
For Banach spaces $X, Y$, we define
\[
\Pi_p(X,Y)=\{T\in B(X,Y):T \mbox{~is a~}p-\mbox{summing operator}\}.
\]
\end{notn}

We use the techniques by Lindenstrauss in \cite{JL} in order to study the extensions of $p$-compact operators. In this investigation, the study of the operator ideals $K_p(X,Y)$ and $K_p^d(X,Y)$ in \cite{SK, SK1} is also used. In particular, when $T\in K_p^d(X,Y)$ we use the decomposition of $T$ derived in \cite{SK1} to estimate $\ka_p^d(\widetilde{T})$, for an extension $\widetilde{T}$ of $T$.

\subsection{Statements of the main results}
In this note, we address question~\ref{Q1} in the following sense. We assume in Theorems~\ref{T1.1} and \ref{T1.2}, we assume that $X, Y, Z$ are Banach spaces with $Z\supseteq X$ and that $X^{**}$ is a $P_\la$-space for some $\la\geq1$. Suppose that $1< p<\iy$.

\begin{thm}\label{T1.1}
Suppose that $T\in K_p(X,Y)$ $(T\in W_p(X,Y))$. Then there exists $\widetilde{T}\in K_p(Z,Y)$ $(\widetilde{T}\in W_p(Z,Y))$ such that $\ka_p(\widetilde{T})\leq \la\ka_p(T)$ $(\omega_p(\widetilde{T})\leq\la\omega_p(T))$.
\end{thm}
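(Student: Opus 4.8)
The plan is to factor $T$ through a quotient of $\ell_q$ (where $1/p+1/q=1$), extend the "scalar part" of the factorization using the $P_\la$ property of $X^{**}$, and reassemble. Concretely, if $T\in K_p(X,Y)$ then by Definition~\ref{D2} there is $y=(y_n)\in\ell_p^s(Y)$ with $T(B_X)\ci E_y(B_{\ell_q})$, and one has a factorization $T=E_y\circ S$ where $S:X\ra \ell_q/N$ for a suitable closed subspace $N$, with $\ka_p(T)$ controlled by $\|y\|_p^s\cdot\|S\|$ (this is the decomposition recalled from \cite{SK,SK1}). The operator $S:X\ra\ell_q/N$ takes values in a space that is separable and reflexive; the key point is that quotients of $\ell_q$ are, by \cite[Theorem~2.1]{JL} (which we are assuming holds for complex scalars), spaces into which operators extend well whenever the codomain of the extension problem is suitably injective — but here we need the extension in the \emph{domain} variable, so instead I would pass to biduals. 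Since $X^{**}$ is a $P_\la$-space and $Z\supseteq X$, we get $Z\supseteq X\ci X^{**}$, hence a projection $P:Z^{**}\ra X^{**}$ (or directly a norm-$\le\la$ projection onto $X^{**}$ from any superspace) — more precisely, view $S:X\ra \ell_q/N$, compose with the canonical embedding to get $S^{**}:X^{**}\ra(\ell_q/N)^{**}=\ell_q/N$ (reflexive), and since $X^{**}$ is $P_\la$, the inclusion $X\hookrightarrow Z$ gives a bounded projection, so that $S$ extends to $\widetilde{S}:Z\ra \ell_q/N$ with $\|\widetilde{S}\|\le\la\|S\|$. Then set $\widetilde{T}=E_y\circ\widetilde{S}:Z\ra Y$; this is $p$-compact because $\widetilde{T}(B_Z)\ci E_y(\|\widetilde{S}\|B_{\ell_q/N})$ factors through $E_y$, and $\ka_p(\widetilde{T})\le\|y\|_p^s\|\widetilde{S}\|\le\la\ka_p(T)$ after optimizing over factorizations.

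In more detail, the steps in order: (1) invoke the standard factorization $T=E_y\circ S$ with $S$ landing in a quotient of $\ell_q$ and $\ka_p(T)=\inf\|y\|_p^s\|S\|$ over all such factorizations; (2) observe $\ell_q/N$ is reflexive (as $1<q<\iy$), so $S^{**}$ identifies with an operator $X^{**}\ra\ell_q/N$ of the same norm; (3) use that $X^{**}$ is $P_\la$ to extend: given $Z\supseteq X$, embed $Z\hookrightarrow Z^{**}$ and note $X^{**}$ sits as a $\la$-complemented subspace of $Z^{**}$ via a projection $Q$, so $\widetilde{S}:=S^{**}\circ Q\circ\iota_Z$ restricted appropriately gives an extension of $S$ to $Z$ with norm $\le\la\|S\|$ — here one checks $\widetilde{S}|_X=S$ because $Q$ fixes $X^{**}\supseteq X$; (4) define $\widetilde{T}=E_y\circ\widetilde{S}$ and verify $\widetilde{T}|_X=E_y\circ S=T$ and $\widetilde{T}\in K_p(Z,Y)$ with the norm estimate; (5) take the infimum over factorizations of $T$ to get $\ka_p(\widetilde{T})\le\la\ka_p(T)$. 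The weakly $p$-compact case runs identically, replacing $\ell_p^s$ by $\ell_p^w$ and $\ka_p$ by $\omega_p$, using that $W_p$ also admits a factorization through (now possibly non-reflexive, but still) a quotient of $\ell_q$ — in that case the $\ell_q$ piece $E_y$ need not be compact but the extension of $S$ still only needs the $P_\la$ property of the bidual, and one should double-check whether $\omega_p$'s factorization lands in a reflexive space or whether a separate argument via $\ell_p^w(Y)\cong B(\ell_q,Y)$ is cleaner.

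The main obstacle I anticipate is step (3), specifically matching up the extension of $S$ to $Z$ (not $Z^{**}$): the $P_\la$ property of $X^{**}$ gives a projection $Z^{**}\ra X^{**}$, but I need the extended operator to be defined on $Z$ itself and to genuinely restrict to the original $S$ on $X$ (not merely on $X^{**}$, though that is stronger and suffices). This is fine because $X\ci Z\ci Z^{**}$ and $X\ci X^{**}\ci Z^{**}$ are compatible embeddings and the projection $Q:Z^{**}\ra X^{**}$ restricted to $X$ is the identity (as $X$ is already inside $X^{**}$, which $Q$ fixes); composing $S^{**}:X^{**}\ra\ell_q/N$ with $Q$ and then with $Z\hookrightarrow Z^{**}$ yields the desired $\widetilde{S}:Z\ra\ell_q/N$. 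A secondary subtlety is ensuring the factorization constant behaves correctly under the infimum defining $\ka_p$ — one must extend each near-optimal factorization separately and then let the factorization approach the optimum, which introduces only an $\e$ and disappears in the limit. Finally, for the weakly $p$-compact case one should confirm that the relevant factorization space is still reflexive or else adapt the bidual argument; I expect this to go through with at most cosmetic changes since $1<p<\iy$ forces $1<q<\iy$ and the natural factorization space $\ell_q/N$ remains reflexive.
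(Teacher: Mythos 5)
Your proposal follows essentially the same route as the paper: factor $T=E_y\circ S$ through the quotient $\ell_{p'}/N_y$ via \cite[Theorem~3.2]{SK}, extend the quotient-valued factor to $Z$ using the $P_\la$ property of $X^{**}$ together with reflexivity of $\ell_{p'}/N_y$ (the paper cites the equivalence $(1)\Lglra(5)$ of \cite[Theorem~2.1]{JL}, whose proof is exactly your bidual/projection argument), and recompose. The only step you elide is showing that each coset $\widetilde{S}(z)$ of quotient norm at most $\la$ admits a representative in $\la B_{\ell_{p'}}$, so that $\widetilde{T}(B_Z)\ci E_{\la y}(B_{\ell_{p'}})$; the paper handles this via proximinality of $N_y$ in the reflexive strictly convex space $\ell_{p'}$, and an $\e$-lifting would do just as well.
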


\begin{thm}\label{T1.2}
Suppose that $T\in K_p^d(X,Y)$. Then there exists $\widetilde{T}\in K_p^d(Z,Y)$ such that $\ka_p^d(\widetilde{T})\leq \la\ka_p^d(T)$.
\end{thm}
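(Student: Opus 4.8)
The plan is to build the extension from the bidual, exactly as one does for Theorem~\ref{T1.1}, and then analyse the adjoint by hand. Recall that a Banach space is a $P_\la$-space if and only if it is $\la$-injective, i.e.\ every bounded operator into it from a subspace of a Banach space extends to the whole space with its norm multiplied by at most $\la$. Since $X^{**}$ is $P_\la$ and $X\ci Z$, the canonical embedding $i_X\colon X\ra X^{**}$, which has norm $1$, therefore extends to an operator $j\colon Z\ra X^{**}$ with $\|j\|\leq\la$. As $T\in K_p^d(X,Y)\ci K_p(X,Y)$ and every $p$-compact set is in particular $\iy$-compact, $T$ is compact, hence weakly compact, so Gantmacher's theorem gives $T^{**}(X^{**})\ci i_Y(Y)$. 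Put $S:=i_Y^{-1}\circ T^{**}\colon X^{**}\ra Y$ and
\[
\widetilde{T}:=S\circ j\colon Z\ra Y .
\]
Then $\widetilde{T}|_X=S\circ i_X=i_Y^{-1}\circ T^{**}\circ i_X=i_Y^{-1}\circ i_Y\circ T=T$, so $\widetilde{T}$ extends $T$, and the whole task is to show that $\widetilde{T}\in K_p^d(Z,Y)$ with the stated norm bound.

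The adjoint side is the easy half. Chasing the duality pairings one checks that $S^*=i_{X^*}\circ T^*\colon Y^*\ra X^{***}$, so that
\[
\widetilde{T}^*=j^*\circ S^*=\big(j^*\circ i_{X^*}\big)\circ T^* ,
\]
where $j^*\circ i_{X^*}\colon X^*\ra Z^*$ has norm at most $\|j^*\|=\|j\|\leq\la$. Because $T^*\in K_p(Y^*,X^*)$ and $K_p$ is a Banach operator ideal, composing on the left with the bounded operator $j^*\circ i_{X^*}$ keeps us inside $K_p$; hence $\widetilde{T}^*\in K_p(Y^*,Z^*)$ with
\[
\ka_p(\widetilde{T}^*)\leq\|j^*\circ i_{X^*}\|\,\ka_p(T^*)\leq\la\,\ka_p(T^*)=\la\,\ka_p^d(T).
\]
Thus the required estimate $\ka_p^d(\widetilde{T})\leq\la\,\ka_p^d(T)$ is in hand as soon as we know that $\widetilde{T}$ belongs to $K_p^d(Z,Y)$ at all, which by definition also requires $\widetilde{T}\in K_p(Z,Y)$.

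For this remaining point it suffices, by the ideal property of $K_p$ applied to $\widetilde{T}=S\circ j$, to show that $S\in K_p(X^{**},Y)$, and here the hypothesis $T\in K_p^d(X,Y)$ (rather than merely $T\in K_p(X,Y)$) is used: besides $T$ being $p$-compact, its adjoint $T^*$ is $p$-compact, and we invoke the decomposition of such operators recorded in \cite{SK1} — together with the factorization of a $p$-compact operator through a quotient of $\ell_q$ recalled above — to exhibit $S=i_Y^{-1}T^{**}$ as a $p$-compact operator from $X^{**}$ into $Y$. The essential feature is that the factors of that decomposition extend along $j$ without their ranges leaving $Y$, so the whole construction stays inside $K_p^d$; this gives $\widetilde{T}\in K_p(Z,Y)$, and, combined with the previous paragraph, $\widetilde{T}\in K_p^d(Z,Y)$ with $\ka_p^d(\widetilde{T})\leq\la\,\ka_p^d(T)$.

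I expect the genuine obstacle to be exactly that last step: upgrading ``$T^{**}$ carries $B_{X^{**}}$ into a relatively $p$-compact subset of $Y^{**}$'' to ``$S$ carries $B_{X^{**}}$ into a relatively $p$-compact subset of $Y$''. This is a regularity-type question for the ideal $K_p$, and it is precisely why the hypothesis is placed on $K_p^d$ and why the explicit decomposition of \cite{SK1} — whose pieces survive both the passage to the bidual and the corestriction to $Y$ — is doing the real work; the reflexivity of $\ell_q$ available from $1<p<\iy$ is what makes that bookkeeping go through. Everything else (the extension of $i_X$ to $j$, Gantmacher's theorem, and the ideal property of $K_p$) is routine.
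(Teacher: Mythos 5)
There is a genuine gap, and you have located it yourself: the claim that $S=i_Y^{-1}T^{**}$ lies in $K_p(X^{**},Y)$ (equivalently, that $\widetilde{T}=S\circ j$ lies in $K_p(Z,Y)$, which the definition of $K_p^d(Z,Y)$ requires in addition to $\widetilde{T}^*\in K_p(Y^*,Z^*)$) is asserted by ``invoking the decomposition of \cite{SK1}'' but never carried out. Everything you actually prove --- the construction of $j$, Gantmacher's theorem, the identity $\widetilde{T}^*=(j^*\circ i_{X^*})\circ T^*$, and the ideal-property estimate $\ka_p(\widetilde{T}^*)\leq\la\ka_p(T^*)$ --- handles only the adjoint half, and as you note that half is the easy one. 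A proof cannot delegate its central step to an unexamined citation plus the remark that you ``expect the genuine obstacle to be exactly that last step.''

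For comparison, the paper avoids the bidual detour entirely: by \cite[Theorem~3.1]{SK1}, $T\in K_p^d(X,Y)$ factors as $T=U\circ V$ with $V\in K(X,W)$ compact, $U\in\Pi_p(W,Y)$ $p$-summing, and $\ka_p^d(T)=\inf\{\pi_p(U)\|V\|\}$ over such factorizations. Since $X^{**}$ is a $P_\la$-space, Lindenstrauss's theorem extends the compact operator $V$ to a compact $\widetilde{V}\in K(Z,W)$ with $\|\widetilde{V}\|\leq\la\|V\|$; then $\widetilde{T}=U\circ\widetilde{V}$ is automatically in $K_p^d(Z,Y)$ by the same characterization, and the norm bound falls out of the infimum formula. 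Your construction can be repaired along exactly these lines: compactness of $V$ gives $V^{**}(X^{**})\ci i_W(W)$, so $\hat{V}:=i_W^{-1}V^{**}$ is compact and $S=U\circ\hat{V}$, whence $S\in K_p^d(X^{**},Y)$ with $\ka_p^d(S)\leq\pi_p(U)\|V\|$ --- but at that point $\hat{V}\circ j$ is precisely the Lindenstrauss extension of $V$, and your argument collapses into the paper's. Without this (or some other argument, e.g.\ observing for $1<p<\iy$ that $E_y(B_{\ell_q})$ is norm compact so that $T^{**}(B_{X^{**}})\ci i_Y(E_y(B_{\ell_q}))$), the proposal does not establish the theorem.
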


It follows that, if a Banach space $X$ has the MAP and every compact (weakly compact) operator $T:X\ra Y$ admits a compact (weakly compact) extension $\widetilde{T}:Z\ra Y$ with $\|\widetilde{T}\|\leq\la\|T\|$, then for $1<p<\iy$ every $p$-compact (weakly $p$-compact) operator $S:X\ra Y$ has a $p$-compact (weakly $p$-compact) extension $\widetilde{S}:Z\ra Y$ such that $\ka_p(\widetilde{T})\leq\la\ka_p(T)$ ($\omega_p(\widetilde{T})\leq\la\omega_p(T)$) (see Corollary~\ref{C1}). Here $Y, Z$ are any Banach spaces such that $Z\supseteq X$. A partial converse to this result is obtained in Corollary~\ref{C4}.

\section{Extension of $T\in K_p(X,Y)$}

We begin this section by observing that a compact operator admits a norm-preserving compact extension by suitably enlarging its codomain.

\begin{prop}\label{P9}
Let $X, Y$ be Banach spaces and $T\in K(X, Y)$. Suppose $Z\supseteq X$ such that $\dim(Z/X)<\iy$. Then, there exists $V\supseteq Y$ such that $T$ has a compact extension $\widetilde{T}:Z\to V$ with $\|T\|=\|\widetilde{T}\|$.
\end{prop}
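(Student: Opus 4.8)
The plan is to enlarge the codomain to an injective Banach space, extend $T$ there by a Hahn--Banach argument, and then use the finite codimension of $X$ in $Z$ together with compactness of $T$ to see that this extension is again compact.

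First recall that $Y$ embeds isometrically into an injective Banach space: the canonical evaluation map $y\mapsto(y^*(y))_{y^*\in B_{Y^*}}$ is an isometry of $Y$ into $V:=\ell_\iy(B_{Y^*})$, and $V$ is injective (equivalently, a $P_1$-space). Indeed, for any inclusion $W\ci W'$ of Banach spaces and any $S\in B(W,V)$, each coordinate functional $\pi_{y^*}\circ S:W\ra\mb{C}$ has norm at most $\|S\|$ and, by the (complex) Hahn--Banach theorem, extends to $W'$ with the same norm; assembling these extensions coordinatewise yields $\widetilde S\in B(W',V)$ with $\|\widetilde S\|=\|S\|$. Applying this with $W=X$, $W'=Z$ and $S=T$ regarded as an operator into $V$ (its norm is unchanged since $Y\hookrightarrow V$ isometrically), one obtains $\widetilde T:Z\ra V$ extending $T$ with $\|\widetilde T\|\leq\|T\|$; the reverse inequality holds because $\widetilde T$ restricts to $T$, so $\|\widetilde T\|=\|T\|$.

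It remains to check that $\widetilde T\in K(Z,V)$, and here the hypothesis $\dim(Z/X)<\iy$ enters. Since $X$ is closed in $Z$ and $Z/X$ is finite dimensional, choose $z_1,\dots,z_n\in Z$ whose cosets form a basis of $Z/X$ and put $F=\mathrm{span}\{z_1,\dots,z_n\}$, so that $Z=X\oplus F$ algebraically; because $F$ is finite dimensional and $X$ is closed, the projection $P:Z\ra X$ along $F$ is bounded. Write $\widetilde T=\widetilde TP+\widetilde T(I-P)$. For $z=x+f$ with $x\in X$ and $f\in F$ one has $\widetilde TPz=\widetilde Tx=Tx$, so $\widetilde TP$ equals $T$ (viewed into $V$) precomposed with the bounded operator $P$, hence is compact; and $\widetilde T(I-P)$ has range inside the finite-dimensional space $\widetilde T(F)$, hence is of finite rank. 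Thus $\widetilde T$ is the sum of a compact and a finite-rank operator, so $\widetilde T\in K(Z,V)$, and we are done: $V\supseteq Y$, $\widetilde T$ extends $T$, and $\|\widetilde T\|=\|T\|$.

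No step here is technically delicate; the point worth emphasizing is that one must enlarge the codomain. A norm-preserving \emph{$Y$-valued} extension of $T$ need not exist --- for instance the projection $P$ above may have norm strictly greater than $1$ --- and the role of the injective space $V$, equivalently the binary intersection property of its closed balls, is precisely to provide a norm-one extension; once that is available, the finite codimension of $X$ in $Z$ is exactly what confines the extension to the compact operators. So the ``main obstacle'' is conceptual rather than computational: recognizing that one should pass from $Y$ to an injective superspace rather than try to extend within $Y$ or modify $Z$.
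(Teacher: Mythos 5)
Your proof is correct, but it takes a genuinely different route from the paper's, which consists of a single citation of Lemma~1.1 of Lindenstrauss's memoir \cite{JL}. That lemma constructs, by an explicit renorming of $Y\oplus\mathbb{R}^n$, an enlargement $V\supseteq Y$ with $\dim(V/Y)=\dim(Z/X)$ admitting a norm-preserving compact extension, and the paper simply invokes it. You instead take $V=\ell_\infty(B_{Y^*})$, extend coordinatewise by Hahn--Banach to get a norm-preserving $\widetilde T:Z\to V$, and recover compactness from the decomposition $\widetilde T=\widetilde TP+\widetilde T(I-P)$, where $P$ is the (automatically bounded) projection onto $X$ along a finite-dimensional complement, so that $\widetilde TP=TP$ is compact and $\widetilde T(I-P)$ is finite rank. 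All steps check out, and your argument has the virtue of being self-contained and of working verbatim for real or complex scalars, since $\ell_\infty(\Gamma)$ is injective in either case. What you give up is the economy of the codomain: Lindenstrauss's $V$ is only a finite-dimensional enlargement of $Y$, which is what makes his lemma iterable one dimension at a time in later arguments, whereas your $V$ is a large injective superspace; for the proposition as stated, which asks only for the existence of \emph{some} $V\supseteq Y$, this costs nothing. One small caveat on your closing remark: the binary intersection property of balls characterizes injectivity only for real scalars, so in the complex setting you should appeal to Hasumi's theorem or, as your actual construction already does, simply to the coordinatewise Hahn--Banach extension, which requires no such characterization.
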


\begin{proof}
This follows from \cite[Lemma 1.1]{JL}.
\end{proof}

We note that a similar conclusion to Proposition~\ref{P9} holds if $T:X\ra Y$ is weakly compact. Moreover, it is clear that the sum of two $p$-compact (weakly $p$-compact) sets is again $p$-compact (weakly $p$-compact). Hence, we obtain the following.

\begin{prop}\label{P8}
Let $X, Y$ be Banach spaces and $T\in K_p(X, Y)$ ($T\in W_p(X,Y)$). Let $Z\supseteq X$ be such that $\dim(Z/X)<\iy$. Then, there exists $V\supseteq Y$ such that $T$ admits a $p$-compact (weakly $p$-compact) extension $\widetilde{T}:Z\to V$ for $1\leq p\leq\iy$. 
\end{prop}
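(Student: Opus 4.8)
The plan is to reduce Proposition~\ref{P8} to Proposition~\ref{P9} (equivalently, to \cite[Lemma~1.1]{JL}) by splitting a $p$-compact operator into a finite-rank part plus a correction, and handling the finite-rank part separately. Since $\dim(Z/X)<\iy$, fix a (not necessarily bounded, but finite-rank) algebraic complement and write $Z = X \oplus N$ with $\dim N = n < \iy$. The key structural fact we need is the one already recorded in the excerpt: any finite-rank operator is $p$-compact for every $1\le p\le\iy$, and the sum of a $p$-compact (weakly $p$-compact) set and a compact set is again $p$-compact (weakly $p$-compact).

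The main step is to produce the enlarged codomain $V$. First I would apply \cite[Lemma~1.1]{JL} (via the proof of Proposition~\ref{P9}) to the compact operator underlying $T$: in the $p$-compact case, $T$ itself is compact (every $p$-compact operator is compact), so Proposition~\ref{P9} directly gives a Banach space $V\supseteq Y$ and a compact extension $\widetilde T\colon Z\to V$ with $\|\widetilde T\| = \|T\|$. A compact operator is in particular $\iy$-compact, but to get genuine $p$-compactness of $\widetilde T$ for the given $p$ I would argue as follows: $\widetilde T(B_Z)$ is contained in $\widetilde T(B_X) + \widetilde T(B_N)$ up to a bounded scalar (using $B_Z \subseteq C(B_X + B_N)$ for a constant $C$ depending on the splitting), and $\widetilde T|_X = T$ maps $B_X$ into a relatively $p$-compact set by hypothesis, while $\widetilde T|_N$ is a finite-rank operator, hence maps the bounded set $B_N$ into a (relatively) $p$-compact set. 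Thus $\widetilde T(B_Z)$ lies in a scalar multiple of the sum of a relatively $p$-compact set and a relatively $p$-compact (indeed compact) set, which is again relatively $p$-compact; absorbing the scalar shows $\widetilde T\in K_p(Z,V)$. The weakly $p$-compact case is identical, replacing ``relatively $p$-compact'' by ``relatively weakly $p$-compact'' throughout and using that finite-rank operators are also weakly $p$-compact and that the relevant sum is stable.

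For the weak case one should double-check that \cite[Lemma~1.1]{JL} (and hence the mechanism behind Proposition~\ref{P9}) produces an extension whose restriction to the finite-dimensional complement is finite rank; this is automatic since $\widetilde T(N)$ is a finite-dimensional subspace of $V$. One should also note that we do not claim any norm estimate $\ka_p(\widetilde T)\le\ka_p(T)$ here—only the existence of a $p$-compact extension into an enlarged codomain—so no delicate control of the factorization constants is required, which is exactly why the finite-codimensional hypothesis makes the argument soft.

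The only real obstacle is bookkeeping: one must be careful that the ``sum of a $p$-compact set and a compact set is $p$-compact'' statement is applied to $B_Z$ correctly, i.e.\ that the decomposition $B_Z \subseteq C(B_X + B_N)$ together with $\widetilde T$ linear gives $\widetilde T(B_Z)\subseteq C\big(T(B_X) + \widetilde T(B_N)\big)$, and then that scaling a relatively $p$-compact set by $C$ keeps it relatively $p$-compact (immediate, since $E_{(x_n)}$ scales to $E_{(Cx_n)}$ and $(Cx_n)\in\ell_p^s(V)$). Everything else is a direct appeal to Proposition~\ref{P9} and the remarks preceding the statement, so the proof is short.

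\begin{proof}
Since $\dim(Z/X)<\iy$, write $Z=X\oplus N$ algebraically with $n:=\dim N<\iy$, and fix $C\geq 1$ with $B_Z\ci C(B_X+B_N)$. As $T\in K_p(X,Y)$ (respectively $T\in W_p(X,Y)$) is compact, Proposition~\ref{P9} provides a Banach space $V\supseteq Y$ and a compact extension $\widetilde{T}:Z\ra V$ with $\|\widetilde{T}\|=\|T\|$; in particular $\widetilde{T}|_X=T$ and $S:=\widetilde{T}|_N$ is a finite rank operator into $V$. By the remark preceding the statement, $S$ is $p$-compact (and weakly $p$-compact) for all $1\leq p\leq\iy$, so $S(B_N)$ is relatively $p$-compact (respectively relatively weakly $p$-compact) in $V$. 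By hypothesis $T(B_X)$ is relatively $p$-compact (respectively relatively weakly $p$-compact) in $Y\ci V$. Hence
\[
\widetilde{T}(B_Z)\ci \widetilde{T}\big(C(B_X+B_N)\big)=C\big(T(B_X)+S(B_N)\big),
\]
and since the sum of a $p$-compact (weakly $p$-compact) set and a compact, hence $p$-compact (weakly $p$-compact), set is again $p$-compact (weakly $p$-compact), and a scalar multiple of a relatively $p$-compact (weakly $p$-compact) set is again such, we conclude that $\widetilde{T}(B_Z)$ is relatively $p$-compact (respectively relatively weakly $p$-compact). Therefore $\widetilde{T}\in K_p(Z,V)$ (respectively $\widetilde{T}\in W_p(Z,V)$), which completes the proof.
\end{proof}
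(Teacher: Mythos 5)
Your proof is correct and follows essentially the route the paper intends (the paper offers no explicit argument for Proposition~\ref{P8}, only the remark preceding it): extend via Proposition~\ref{P9}, split $B_Z$ through a finite-dimensional topological complement, and absorb the finite-rank piece $\widetilde{T}|_N$, whose image is relatively $p$-compact, into the relatively $p$-compact set $T(B_X)$ --- indeed your version is slightly more careful than the paper's remark, since you use that $S(B_N)$ is $p$-compact (being finite-dimensional) rather than merely compact. The one slip is the clause asserting that $T\in W_p(X,Y)$ ``is compact'': a weakly $p$-compact operator is only weakly compact (e.g.\ the identity on $\ell_2$), so in the parenthetical case you must invoke the weakly compact analogue of Proposition~\ref{P9} recorded in the remark immediately following it (or note that \cite[Lemma~1.1]{JL} already furnishes a bounded norm-preserving extension of an arbitrary bounded operator), after which your argument goes through verbatim.
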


We do not know whether the norm $\ka_p$ (or $\omega_p$) of the operator $\widetilde{T}$ stated in Proposition~\ref{P8} can be preserved. 

\begin{cor}\label{C2}
Let $T\in K_p(X,Y)$ admit a bounded extension $\widetilde{T}:Z\to Y$, where $\dim (Z/X)<\iy$. Then, $\widetilde{T}\in K_p(Z,Y)$ for $1\leq p\leq\iy$.
\end{cor}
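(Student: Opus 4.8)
The plan is to reduce the statement to Proposition~\ref{P8} together with the fact that the space of $p$-compact operators is finite-codimensionally well-behaved. First I would observe that since $\dim(Z/X)<\iy$, by Proposition~\ref{P8} there is a Banach space $V\supseteq Y$ and a $p$-compact extension $\widehat{T}\colon Z\to V$ of $T$. The issue is that $\widehat{T}$ takes values in $V$, not in $Y$, whereas the hypothesis gives us a \emph{bounded} extension $\widetilde{T}\colon Z\to Y$. I would then compare the two extensions: the operator $\widehat{T}-\iota\circ\widetilde{T}\colon Z\to V$ (where $\iota\colon Y\hookrightarrow V$) vanishes on $X$, hence factors through the finite-dimensional quotient $Z/X$, so it is a finite-rank operator and therefore $p$-compact for all $1\leq p\leq\iy$ by the remark in the preliminaries. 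Consequently $\iota\circ\widetilde{T}=\widehat{T}-(\widehat{T}-\iota\circ\widetilde{T})$ is a difference of two $p$-compact operators into $V$, hence $p$-compact; since $\iota$ is an isometric embedding, $\widetilde{T}(B_Z)$ is relatively $p$-compact in $Y$ iff $\iota(\widetilde{T}(B_Z))$ is relatively $p$-compact in $V$, and the latter holds because relative $p$-compactness passes to subspaces (if $K\subseteq E_y(B_{\ell_q})$ for some $y\in\ell_p^s(V)$ and $K\subseteq \iota(Y)$, one needs a sequence in $Y$; this is where a small argument is required, see below). Thus $\widetilde{T}\in K_p(Z,Y)$.

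The cleanest route, avoiding the subspace-descent subtlety, is instead to bypass Proposition~\ref{P8} entirely and argue directly on $Z$. Write $Z=X\oplus M$ as a vector-space direct sum with $\dim M=\dim(Z/X)=n<\iy$ (algebraic complement), and let $\pi\colon Z\to X$ be the associated (bounded, since $\dim Z/X<\iy$) linear projection. Then $\widetilde{T}-T\circ\pi\colon Z\to Y$ vanishes on $X$, so it factors through $Z/X$ and hence is finite-rank, thus $p$-compact. On the other hand $T\circ\pi$ is $p$-compact because $K_p$ is an operator ideal: $T\in K_p(X,Y)$ and $\pi\in B(Z,X)$ force $T\circ\pi\in K_p(Z,Y)$ with $\ka_p(T\circ\pi)\leq\|\pi\|\,\ka_p(T)$. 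Therefore $\widetilde{T}=(T\circ\pi)+(\widetilde{T}-T\circ\pi)$ is a sum of two members of $K_p(Z,Y)$, hence lies in $K_p(Z,Y)$, since $K_p(Z,Y)$ is a linear space (indeed a Banach operator ideal). The weakly $p$-compact case is identical with $W_p$ in place of $K_p$.

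I expect the only real point requiring care is the assertion that an operator $Z\to Y$ vanishing on the finite-codimensional subspace $X$ is finite-rank: this is immediate since such an operator factors as $Z\to Z/X\to Y$ and $\dim(Z/X)<\iy$, and any finite-rank operator is $p$-compact for every $1\leq p\leq\iy$ by the observation recorded before Definition~\ref{D4} in the preliminaries. Everything else is a formal consequence of the operator-ideal axioms for $K_p$ and $W_p$ already cited in the text. A remark worth adding is that this argument also shows $\ka_p(\widetilde{T})\leq\|\pi\|\,\ka_p(T)+\ka_p(\widetilde{T}-T\circ\pi)$, but since no control on $\|\pi\|$ or on the finite-rank correction term is available in general, this does not resolve the norm-preservation question flagged after Proposition~\ref{P8}.
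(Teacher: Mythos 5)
Your second argument (decompose $\widetilde{T}=T\circ\pi+(\widetilde{T}-T\circ\pi)$ with $\pi$ the bounded projection onto the closed finite-codimensional subspace $X$, so that $\widetilde{T}$ is a $p$-compact operator plus a finite-rank operator) is correct and is essentially the route the paper intends, since the corollary is stated right after the remark that the sum of a $p$-compact set and the image of a bounded set under a finite-rank map is again $p$-compact. Your first paragraph's detour through Proposition~\ref{P8} and the enlarged codomain $V$ is unnecessary and, as you note yourself, would require a separate descent argument, but you correctly discard it in favour of the direct proof.
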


We now assume a reflexive space, and hence all its quotients are conjugate spaces. Our next result uses the equivalence $(1)\Lglra (5)$ in Theorem~2.1 of \cite[p.11]{JL}.

\begin{thm}\label{P2}
Let $X$ be a Banach space such that $X^{**}$ is a $P_\la$ space. Suppose that $1< p\leq\iy$, and let $Z$ be a Banach space with $X\ci Z$.
\bla
\item If $T\in K_p(X,Y)$, then there exists $\widetilde{T}\in K_p(Z,Y)$ such that $\ka_p(\widetilde{T})\leq \la\ka_p(T)$.
\item If $T\in W_p(X,Y)$, then there exists $\widetilde{T}\in W_p(Z,Y)$ such that $\omega_p(\widetilde{T})\leq \la\omega_p(T)$.
 \el
\end{thm}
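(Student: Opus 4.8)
The plan is to reduce the extension problem for a $p$-compact (or weakly $p$-compact) operator to an extension problem for a bounded operator into a quotient of $\ell_q$, and then to apply the Lindenstrauss machinery via the hypothesis that $X^{**}$ is a $P_\la$ space. Recall that $T \in K_p(X,Y)$ (resp. $W_p(X,Y)$) factors, up to the ideal norm, through a quotient of $\ell_q$: there is a closed subspace $N \ci \ell_q$, a bounded operator $A: X \to \ell_q/N$ and a bounded operator $B: \ell_q/N \to Y$ with $T = BA$ and $\|A\|\cdot\|B\|$ controlled by $\ka_p(T)$ (resp. $\omega_p(T)$); here one uses that $\ell_q/N$ is itself a quotient of $\ell_q$ and that $q<\iy$ since $p>1$. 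Now $\ell_q/N$ is reflexive, so it is a conjugate space, and by the equivalence $(1)\Lglra(5)$ in \cite[Theorem~2.1]{JL}, since $X^{**}$ is a $P_\la$ space the operator $A: X \to \ell_q/N$ admits an extension $\widetilde A : Z \to \ell_q/N$ with $\|\widetilde A\| \le \la \|A\|$. Setting $\widetilde T = B\widetilde A : Z \to Y$ gives a bounded extension of $T$, and since $\widetilde T$ still factors through the quotient $\ell_q/N$ of $\ell_q$ it lies in $K_p(Z,Y)$ (resp. $W_p(Z,Y)$) with $\ka_p(\widetilde T) \le \|\widetilde A\|\cdot\|B\| \le \la\, \ka_p(T)$ (resp. $\omega_p(\widetilde T) \le \la\, \omega_p(T)$).

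The first step I would carry out is to make the factorization statement precise and cite where it is established in \cite{SK,SK1}: namely that $\ka_p(T) = \inf \{\|B\|\cdot\|A\|\}$ over all factorizations $T = BA$ with $A: X \to \ell_q/N$, $B: \ell_q/N \to Y$ (and the analogous statement for $\omega_p$, where the middle space is a quotient of $\ell_q$ but one works with $B \in B(\ell_q/N, Y)$ rather than requiring compactness of $B$). The second step is to observe that $\ell_q/N$ is reflexive for $1 < q < \iy$, hence a dual space, so that Lindenstrauss's criterion applies: I would state explicitly that $(1)\Lglra(5)$ of \cite[Theorem~2.1]{JL} says that $X^{**}$ being a $P_\la$ space is equivalent to every operator from $X$ into a conjugate space extending to any superspace with norm blow-up at most $\la$. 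The third step is to apply this to $A$, form $\widetilde T = B \widetilde A$, and verify the ideal-norm estimate, which is immediate from submultiplicativity of the factorization norm. I would also note that the case $p=\iy$ (so $q=1$) needs separate comment since $\ell_1/N$ need not be reflexive; but there one can instead use that compact operators factor through a $c_0$-type space or simply appeal to the compact-operator extension results — in fact for $p=\iy$ the statement reduces to the classical fact that compact operators out of $X$ extend when $X^{**}$ is a $P_\la$ space, which is exactly what Lindenstrauss's theorem gives in the compact setting.

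The main obstacle I anticipate is making sure the intermediate space in the factorization is genuinely a conjugate (equivalently, reflexive) space so that Lindenstrauss's $(5)$ is applicable, and tracking the ideal norm through the extension rather than merely the operator norm. Concretely: one must ensure that the factorization $T = BA$ can be chosen with $\|A\|\cdot\|B\|$ arbitrarily close to $\ka_p(T)$ (an infimum, possibly not attained), so that after extending $A$ one still gets the clean bound $\ka_p(\widetilde T) \le \la \ka_p(T)$ by letting the factorization approach the infimum. A secondary subtlety is the weakly $p$-compact case: one must check that the analogue of the quotient-of-$\ell_q$ factorization holds for $W_p$ with the right control of $\omega_p$, and that extending the "$A$-part" preserves membership in $W_p$ — this is where I would be most careful to cite the precise structural result from \cite{SK1} describing $\omega_p$ in terms of such factorizations. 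Everything else (forming $B\widetilde A$, checking it extends $T$, submultiplicativity of the norm) is routine.
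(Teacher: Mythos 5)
Your proposal follows essentially the same route as the paper: factor $T$ through the reflexive quotient $\ell_{p'}/N_y$ via \cite[Theorem~3.2]{SK}, extend the first factor using $(1)\Lglra(5)$ of \cite[Theorem~2.1]{JL} (which applies because $X^{**}$ is a $P_\la$ space and the quotient is a conjugate space), compose, and treat $p=\iy$ separately as the classical compact case. The one place where your write-up is imprecise is the norm bookkeeping: the formula $\ka_p(T)=\inf\{\|B\|\cdot\|A\|\}$ over factorizations through quotients of $\ell_q$ is not the definition in play here --- the correct quantity is $\inf\{\|y\|_p^s: T(B_X)\ci E_y(B_{\ell_{p'}})\}$, and $\|B\|=\|\widetilde{E_y}\|$ can be strictly smaller than $\|y\|_p^s$ (moreover, for an arbitrary bounded $B$ the composite $B\widetilde{A}$ need not be $p$-compact at all, so the factorization must keep $B$ of the form $\widetilde{E_y}$). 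The paper closes this step differently and more concretely: for $z\in B_Z$ it uses that $N_y$ is Chebyshev in the reflexive, strictly convex space $\ell_{p'}$ to pick a representative $\al-\beta$ of $\widetilde{T_y}(z)$ lying in $\la B_{\ell_{p'}}$, which shows directly that $\widetilde{T}(B_Z)\ci E_{\la y}(B_{\ell_{p'}})$ and hence $\ka_p(\widetilde{T})\leq\|\la y\|_p^s=\la\|y\|_p^s$, after which one takes the infimum over $y$. Your argument is repaired by the same device, or by replacing $\|B\|$ with $\ka_p(\widetilde{E_y})\leq\|y\|_p^s$ and invoking the ideal property $\ka_p(B\widetilde{A})\leq\ka_p(B)\|\widetilde{A}\|$; either way the conclusion stands, so this is a fixable gap in the norm estimate rather than in the strategy.
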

\begin{proof}
$(a).$ The case for $p=\iy$ follows from \cite[p.11]{JL}, it remains to consider $1< p<\iy$.

From \cite[Theorem~3.2]{SK}, we obtain $y\in \ell_p^s(Y)$ such that $T=\widetilde{E_y}\circ T_y$, where $T_y:X\ra \ell_{p'}/N_y$ is a bounded linear and $\widetilde{E_y}:\ell_{p'}/N_y\ra Y$ is a compact linear, $\frac{1}{p}+\frac{1}{p'}=1$. By assumption, we obtain $\widetilde{T_y}:Z\ra \ell_{p'}/N_y$ a bounded linear, such that $\|\widetilde{T_y}\|\leq \la \|T_y\|\leq\la$ [since$\|T_y\|\leq 1$]. Define $\widetilde{T}=\widetilde{E_y}\circ \widetilde{T_y}$.
  
{\sc Claim:~}$\widetilde{T}$ is $p$-compact and $\ka_p(\widetilde{T})\leq\la \ka_p(T)$.

Indeed, there exists $y\in \ell_p^s(Y)$ such that $\widetilde{T}(B_Z)\ci E_y(B_{\ell_{p'}})$. Let $z\in B_Z$, then $\widetilde{T_y}(z)\in \ell_{p'}/N_y$ i.e., there exists $\al\in\ell_{p'}$ with $\widetilde{T_Y}(z)=\al+N_y$. Since $\|\widetilde{T_y}\|\leq \la$, we get $\|\al+N_y\|\leq \la$.

Because $N_y$ is a closed subspace of the reflexive and strictly convex space, $N_y$ is Chebyshev in $\ell_{p'}$. Thus, there exists a unique $\beta\in N_y$ such that $\|\al-\beta\|_{p'}=\|\al+N_y\|\leq\la$. Hence $\al-\beta\in \la B_{\ell^{p'}}$.

Now consider $\widetilde{T_y}(z)=\al-\beta+N_y$. Therefore, $\widetilde{E_y}(\al-\beta+N_y)\in \la E_y(B_{\ell_{p'}})=E_{\la y}(B_{\ell_{p'}}), \la y\in \ell_p^s(Y)$. In this way, we obtain $\widetilde{T}(B_Z)\ci E_{z}(B_{\ell^{p'}}), z=\la y$.

Observe that $\|\la y\|_p^s=\la \|y\|_p^s$. This yields $\ka_p(\widetilde{T})\leq\la \ka_p(T)$.

$(b)$ The argument is analogous to $(a)$ with $\|y\|_p^w$ in place of $\|y\|_p^s$ when evaluating $\omega_p(\widetilde{T})$.
\end{proof}

In \cite[Theorem~3.1]{CK} Choi and Kim have derived that $\widetilde{E_y}$ is a $p$-compact operator. Hence, $\widetilde{T}$ is a $p$-compact operator from by definition.

\begin{cor}\label{C1}
Let $X$ be a Banach space with MAP and $\la\geq 1$. Let $Y, Z$ be Banach spaces such that $Z\supseteq X$. Suppose that for every compact (weakly compact) operator $T:X\ra Y$ has a compact (weakly compact) extension $\widetilde{T}:Z\ra Y$ with $\|\widetilde{T}\|\leq\la\|T\|$. Then, every $p$-compact (weakly $p$-compact) operator $S:X\ra Y$ has a $p$-compact (weakly $p$-compact) extension $\widetilde{S}:Z\ra Y$ with $\ka_p(\widetilde{S})\leq\la\ka_p(S)$, where $1<p\leq\iy$. 
\end{cor}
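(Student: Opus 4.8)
The plan is to rerun the proof of Theorem~\ref{P2} essentially verbatim, and to change only the one step in which the hypothesis ``$X^{**}$ is $P_\la$'' was invoked --- namely, the extension to $Z$, with norm blown up by at most $\la$, of the auxiliary bounded operator $S_y$ into a reflexive quotient of $\ell_{p'}$ that appears in the factorization of a $p$-compact operator. That step I would instead carry out using the MAP of $X$ together with the standing hypothesis on compact extensions into $Y$. The case $p=\iy$ requires no argument: $\iy$-compactness is compactness and $\ka_\iy$ is the operator norm, so for $p=\iy$ the conclusion is just the hypothesis. Assume then $1<p<\iy$, take $S\in K_p(X,Y)$, and for $\e>0$ apply \cite[Theorem~3.2]{SK} to write $S=\widetilde{E_y}\circ S_y$ as in the proof of Theorem~\ref{P2}: $y\in\ell_p^s(Y)$ with $\|y\|_p^s\le\ka_p(S)+\e$, $S_y:X\ra\ell_{p'}/N_y$ bounded with $\|S_y\|\le1$, $\widetilde{E_y}:\ell_{p'}/N_y\ra Y$ compact, and $\widetilde{E_y}(B_{\ell_{p'}/N_y})\ci E_y(B_{\ell_{p'}})$. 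If $S_y$ admits an extension $\widetilde{S_y}:Z\ra\ell_{p'}/N_y$ with $\|\widetilde{S_y}\|\le\la$, then, setting $\widetilde{S}:=\widetilde{E_y}\circ\widetilde{S_y}$ and repeating the Chebyshev-selection argument of Theorem~\ref{P2} (available since $\ell_{p'}$ is reflexive and strictly convex for $1<p<\iy$), one gets $\widetilde{S}|_X=S$ and $\widetilde{S}(B_Z)\ci\la E_y(B_{\ell_{p'}})=E_{\la y}(B_{\ell_{p'}})$ with $\la y\in\ell_p^s(Y)$; hence $\widetilde{S}\in K_p(Z,Y)$ with $\ka_p(\widetilde{S})\le\la\|y\|_p^s\le\la(\ka_p(S)+\e)$, and letting $\e\downarrow0$ (or taking $\|y\|_p^s=\ka_p(S)$, the infimum being attained) gives $\ka_p(\widetilde{S})\le\la\ka_p(S)$. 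The weakly $p$-compact statement follows by the routine substitutions --- $\ell_p^w$ for $\ell_p^s$, $\omega_p$ for $\ka_p$, ``weakly compact'' for ``compact'', and the weakly $p$-compact factorization --- using the weakly-compact-extension hypothesis.

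So everything reduces to the claim that \emph{$S_y$ extends to some $\widetilde{S_y}:Z\ra\ell_{p'}/N_y$ with $\|\widetilde{S_y}\|\le\la$.} I would attack this by using the MAP of $X$ to reduce to finite-rank operators: choose a net $(F_\be)$ of finite-rank operators on $X$ with $\|F_\be\|\le1$ and $F_\be\ra I_X$ uniformly on compact sets (so $F_\be x\ra x$ in norm for every $x$); for each $\be$, the companion operator $SF_\be=\widetilde{E_y}\circ(S_yF_\be):X\ra Y$ is of finite rank, hence compact, so the hypothesis produces a compact extension $\widetilde{SF_\be}:Z\ra Y$ with $\|\widetilde{SF_\be}\|\le\la\|SF_\be\|$. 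The point is then to use these, together with the reflexivity of $\ell_{p'}/N_y$ (this is where $1<p<\iy$ is needed), to manufacture a single extension $\widetilde{S_y}:Z\ra\ell_{p'}/N_y$ of $S_y$ with $\|\widetilde{S_y}\|\le\la$: morally, one wants to transport each $\widetilde{SF_\be}$ back to an extension of $S_yF_\be$ into $\ell_{p'}/N_y$ --- which is possible on $X$, where $SF_\be$ factors through $\widetilde{E_y}$ with finite-dimensional range --- and then extract a weak-operator cluster point, whose norm is controlled by $\la$ via weak lower semicontinuity and whose restriction to $X$ is $S_y$ because $F_\be x\ra x$.

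I expect the genuine difficulty of the corollary to lie entirely in this last assembly. The awkward feature is that the extensions $\widetilde{SF_\be}$ supplied by the hypothesis are only known to be compact operators with range in $Y$ and with norm controlled in terms of $\|SF_\be\|$ rather than $\|S_yF_\be\|$; they need not respect the factorization through the reflexive space $\ell_{p'}/N_y$, so reconciling them into a coherent net of $\ell_{p'}/N_y$-valued extensions of the $S_yF_\be$ with the uniform bound $\la$ --- the only input being the finite-dimensional structure of each $SF_\be$ on $X$ and the MAP net --- is where the work is. Once such a coherent net is in place, reflexivity of $\ell_{p'}/N_y$ hands over the required $\widetilde{S_y}$, and the proof concludes as in the first paragraph; the parenthetical weakly $p$-compact statement is obtained by the same scheme with weakly compact data and $\omega_p$ in place of $\ka_p$.
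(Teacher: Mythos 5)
Your overall strategy --- rerun Theorem~\ref{P2} and replace the single use of ``$X^{**}$ is $P_\la$'' by a direct, hand-built construction of the extension $\widetilde{S_y}:Z\ra\ell_{p'}/N_y$ from the MAP and the compact-extension hypothesis --- is not what the paper does, and, more importantly, it is not completed. The paper's proof is a two-line reduction: by the equivalence in \cite[Theorem~2.1]{JL} (the same one invoked just before Theorem~\ref{P2}), the hypothesis that every compact $T:X\ra Y$ has a compact extension to $Z$ with $\|\widetilde{T}\|\leq\la\|T\|$, combined with the MAP of $X$, is precisely one of the conditions equivalent to ``$X^{**}$ is a $P_\la$ space''; once that is established, Theorem~\ref{P2} applies verbatim. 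Your first paragraph (the factorization $S=\widetilde{E_y}\circ S_y$, the Chebyshev selection, the estimate $\ka_p(\widetilde{S})\leq\la\|y\|_p^s$) is fine, but it is just a restatement of the body of Theorem~\ref{P2}; the entire content of the corollary is the step you defer.

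That deferred step is a genuine gap. The claim that $S_y$ extends to $Z$ into the reflexive quotient $\ell_{p'}/N_y$ with norm at most $\la$ is exactly the ``extension property for operators into conjugate spaces with constant $\la$'', which in Lindenstrauss's Theorem~2.1 is yet another condition equivalent to $X^{**}$ being $P_\la$; so what you propose to prove by hand is one of the nontrivial implications of that theorem, and your sketch does not carry it out --- you say yourself that reconciling the $\widetilde{SF_\be}$ into a coherent net of $\ell_{p'}/N_y$-valued extensions of the $S_yF_\be$ ``is where the work is'', and that work is absent. The obstruction is real: the hypothesis only supplies compact extensions $\widetilde{SF_\be}:Z\ra Y$ whose values need not lie in $\widetilde{E_y}(\ell_{p'}/N_y)$ and whose norms are controlled by $\la\|SF_\be\|$ rather than by $\la\|S_yF_\be\|$, and there is no canonical, norm-controlled way to lift them back through $\widetilde{E_y}$, which in general is neither injective nor bounded below. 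Hence the weak-operator cluster-point argument has no uniformly bounded net of $\ell_{p'}/N_y$-valued extensions to cluster. The short and correct route is to cite \cite[Theorem~2.1]{JL} to conclude that $X^{**}$ is a $P_\la$ space and then apply Theorem~\ref{P2}.
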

\begin{proof}
Note that if every compact operator $T:X\ra Y$ has a compact extension $\widetilde{T}:Z\ra Y$ with $\|\widetilde{T}\|\leq\la\|T\|$ and $X$ has MAP then $X^{**}$ is a $P_\la$ space and hence the result follows from Theorem~\ref{P2}.
\end{proof}

We now derive a sufficient condition for the spaces which are $L_1$-preduals. The main result we derive in this connection is that Theorem~\ref{T5} uses a characterization of $L_1$-preduals of Banach spaces under real scalars.

\begin{lem}\label{L1}
Let $(E,\|.\|)$ be a finite-dimensional Banach space and $K$ be a relatively compact set in $E$. Then for $\e>0$ there exists a finite set $\{x_1,x_2,...,x_k\}\subset E$ such that $K\subseteq conv\{x_1,...,x_k\}$ and $\sup\{\|x_i\|:1\leq i\leq k\}<\sup\{\|k\|:k\in K\}+\e$. 
\end{lem}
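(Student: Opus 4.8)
The plan is to use compactness of $K$ to produce a finite $\e/2$-net, and then argue that the slightly enlarged points can be pushed just barely outside $K$ while still being controlled in norm. First I would set $M=\sup\{\|k\|:k\in K\}$ and fix $\e>0$. Since $K$ is relatively compact in the finite-dimensional space $E$, its closure $\overline{K}$ is compact, so there is a finite $\delta$-net $\{y_1,\dots,y_k\}\subseteq \overline{K}$ for a $\delta>0$ to be chosen later; that is, every $k\in K$ lies within $\delta$ of some $y_j$. Each $y_j$ satisfies $\|y_j\|\leq M$. The points $y_j$ themselves need not have $K$ in their convex hull, so the $y_j$ must be perturbed outward.

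The key step is the outward perturbation. I would pick a point $x_0\in E$ with $\|x_0\|<\e/2$ (for instance $x_0=0$, which makes the bookkeeping trivial, but keeping a generic small $x_0$ costs nothing), and replace each $y_j$ by $x_j := y_j + t\,(y_j - x_0)$ for a small scaling parameter $t>0$; equivalently, dilate the net by a factor $1+t$ about the center $x_0$. Then $\mathrm{conv}\{y_1,\dots,y_k\}$ is contained in the interior of $\mathrm{conv}\{x_1,\dots,x_k\}$ relative to their common affine hull, and by choosing $t$ small enough the dilation moves each point by at most $\e/2$, so the dilated convex hull still contains the $\delta$-neighborhood of $\mathrm{conv}\{y_1,\dots,y_k\}$ provided $\delta$ is small compared to the "thickness" gained; more directly, one checks that for $t$ and $\delta$ small the set $\mathrm{conv}\{x_j\}$ contains $K$, since every $k\in K$ is within $\delta$ of some $y_j$ and the dilation pushes each $y_j$ outward by a definite amount in all directions of the affine hull. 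Finally $\|x_j\| \leq (1+t)\|y_j\| + t\|x_0\| \leq (1+t)M + t\e/2$, which is $< M+\e$ once $t$ is chosen small enough (say $t < \e/(2M+\e)$, and $t<1$).

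The main obstacle is making the containment $K\subseteq \mathrm{conv}\{x_1,\dots,x_k\}$ genuinely rigorous rather than merely plausible: one must ensure that the outward dilation actually absorbs the $\delta$-error in \emph{every} direction, which is clean when $K$ (hence the net) affinely spans $E$, but needs a small remark when $K$ lies in a proper affine subspace — in that case one works inside the affine hull of $K\cup\{x_0\}$ and only the relative interior matters, so the argument still goes through. The cleanest route is probably to first reduce to the case where $0$ lies in the relative interior of $\overline{\mathrm{conv}(K)}$ by translating, then take $x_j=(1+t)y_j$, and invoke the elementary fact that for a compact convex set $C$ with $0$ in its relative interior, $\overline{C}\subseteq \mathrm{int}_{\mathrm{aff}}((1+t)C)$ for every $t>0$; choosing the net fine enough relative to $t$ then yields $K\subseteq (1+t)\,\mathrm{conv}\{y_j\}=\mathrm{conv}\{x_j\}$, and the norm bound is as above. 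Since $E$ is finite-dimensional, all the topological facts used (compactness of $\overline K$, existence of finite nets, the relative-interior dilation fact) are standard.
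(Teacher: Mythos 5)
Your route is genuinely different from the paper's, and in its final form it can be made to work, but as first stated it contains a real error: the dilation center $x_0$ cannot be an arbitrary point with $\|x_0\|<\varepsilon/2$ (in particular $x_0=0$ does not "make the bookkeeping trivial" — it breaks the argument). A homothety $y\mapsto y+t(y-x_0)$ only absorbs a $\delta$-neighbourhood of $\mathrm{conv}\{y_j\}$ if $x_0$ lies well inside $\mathrm{conv}\{y_j\}$: concretely, for $K$ the unit disk centered at $(10,0)$ in $\mathbb{R}^2$ and $x_0=0$, the dilation by $1+t$ translates the far side of the disk \emph{away} by about $10t$ while enlarging the radius only by $t$, so $(1+t)\,\mathrm{conv}\{y_j\}$ fails to contain $K$ no matter how fine the net is. Your "cleanest route" at the end repairs this by putting the center in the relative interior of $\overline{\mathrm{conv}(K)}$, but then $\|x_0\|$ is bounded only by $M=\sup_{k\in K}\|k\|$, not by $\varepsilon/2$, and the norm estimate must be redone as $\|x_j\|\leq(1+t)M+tM<M+\varepsilon$ for small $t$ (still fine). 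Even then, the step "choosing the net fine enough yields $K\subseteq(1+t)\,\mathrm{conv}\{y_j\}$" is the crux and is only asserted: it requires a quantitative fact such as "if the ball of radius $r$ about $x_0$ (in the affine hull) lies in $\overline{\mathrm{conv}(K)}$ and $\{y_j\}$ is a $\delta$-net of $\overline{K}$, then the ball of radius $r-\delta$ about $x_0$ lies in $\mathrm{conv}\{y_j\}$", proved by a separation/erosion argument, after which one takes $\delta\leq tr/(1+t)$. None of this is wrong, but it is where all the work lives, and the quantifier order ($x_0$ and $r$ first, then $t$, then $\delta$) must be stated explicitly to avoid circularity.

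The paper avoids all of this convex geometry. It fixes a basis of $E$, introduces the coordinate sup-norm $\|\cdot\|_\infty$ (equivalent to $\|\cdot\|$ with constants $c_1,c_2$), covers $K$ by finitely many $\|\cdot\|_\infty$-balls of radius $c_2\delta$ centered at points of $K$, and takes as $\{x_1,\dots,x_k\}$ the $2^n$ vertices of each such cube. Each cube is the convex hull of its own vertices, so the union of the cubes — hence $K$ — sits inside the convex hull of all the vertices, and each vertex is within $\delta c_2/c_1<\varepsilon$ of a point of $K$, giving the norm bound immediately. This is shorter, needs no dilation or relative-interior discussion, and handles degenerate (non-spanning) $K$ with no extra cases. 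If you keep your approach, fix the choice of $x_0$ and write out the erosion step; otherwise the cube-vertex argument is the more economical proof.
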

\begin{proof}
Suppose that $\dim E=n$. Hence, there exists a basis $(e_i)_{i=1}^n$ of $E$ such that $\|e_i\|=1$, $1\leq i\leq n$. We induce $\|x\|_\infty=\max_i|\alpha_i|$, where $x=\sum_i\alpha_ie_i$.
    
Now there exist $c_1, c_2>0$ such that $$c_1\|x\|\leq\|x\|_\infty\leq c_2\|x\|, \text{ for all }x\in E.\quad\quad(1)$$

We denote a ball in $E$ centered at $x$ and radius $r$ with respect to the norms $\|.\|$ and $\|.\|_\infty$ by $B_E(x,r)$ and $B_\infty(x,r)$, respectively.
 Now for $\varepsilon>0$, choose $\delta>0$ such that $\delta<\frac{\varepsilon c_1}{c_2}$. For this $\delta$ there exist $x_1,x_2,...,x_m\in K$ such that $$\begin{aligned}
        K&\subseteq \bigcup_{i=1}^m B_E(x_i,\delta)\\
        &\subseteq \bigcup_{i=1}^m B_{\infty}(x_i,c_2\delta)\text{ [ by (1) ]}.
    \end{aligned}$$
    Now observe that each $B_\infty(x_i,c_2\delta)$ has $2^n$ extreme points $\{x_i^1,...,x_i^{2^n}\}$ and $B_{\infty}(x_i,\delta c_2)=conv\{x_i^1,...,x_i^{2^n}\}$. It follows that $$K\subseteq conv\{x_1^1,...,x_1^{2^n},...,x_m^1,...,x_m^{2^n}\}.$$

Also note that for a fixed $i$ and for $1\leq j\leq 2^n$, $\|x_i-x_i^j\|_\infty\leq \delta c_2$ and hence, $\|x_i-x_i^j\|\leq \frac{\delta c_2}{c_1}.$ It follows that $\|x_i^j\|\leq \|x_i\|+\frac{\delta c_2}{c_1}$. Since each $x_i\in K$, we have
    $$\begin{aligned}
    \sup\{\|x_i^j\|:1\leq i\leq m\text{ and }1\leq j\leq 2^n\}&\leq \sup\{\|k\|:k\in K\}+\frac{\delta c_2}{c_1}\\
    &<\sup\{\|k\|:k\in K\}+\varepsilon.
    \end{aligned}$$
    Hence the result follows.
\end{proof}

For a Banach space $Y$, by $c_{00}^s(Y)$, we denote the set of all finitely supported sequences in $Y$ with the usual supremum norm. 

\begin{rem}\label{R1}
Note that if $T$ is a finite rank operator, then by Lemma~\ref{L1}, $\|T\|=\inf\{\|y\|_\iy: T(B_X)\ci E_y(B_{\ell_1}) \mbox{~and~} y\in c_{00}^s(Y)\}$.
\end{rem}

Recall that $\lim_{p\to\infty}\|(\alpha_i)\|_p=\|(\alpha_i)\|_\infty$ for $(\alpha_i)\in\mathbb{K}^n$. 
Also, recall the following from \cite[Proposition~3.15]{DPS}

\begin{thm}
Let $X,Y$ be Banach spaces and $T\in K_p(X,Y)$ for some $p\geq 1$. Then $\kappa_p(T)=\inf\big\{\|y\|_p^s:T(B_X)\subseteq E_y(B_{\ell_{p'}})\big\}$.
\end{thm}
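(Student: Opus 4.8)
The plan is to prove the two inequalities separately. For the inequality $\kappa_p(T)\le\inf\{\|y\|_p^s:T(B_X)\subseteq E_y(B_{\ell_{p'}})\}$, I would invoke the factorization of $p$-compact operators from \cite[Theorem~3.2]{SK} that was already used in the proof of Theorem~\ref{P2}: given $y\in\ell_p^s(Y)$ with $T(B_X)\subseteq E_y(B_{\ell_{p'}})$, one factors $T=\widetilde{E_y}\circ T_y$ through a quotient $\ell_{p'}/N_y$ with $\|T_y\|\le 1$, and the operator-ideal norm $\kappa_p(T)$ is, by its very definition as the infimum over such factorizations, bounded by the relevant norm of $y$, namely $\|y\|_p^s$. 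Taking the infimum over all admissible $y$ gives this direction. The point is simply to recall that $\kappa_p$ is \emph{defined} via such factorizations, so every choice of $y$ dominating $T(B_X)$ yields an upper bound.

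For the reverse inequality $\kappa_p(T)\ge\inf\{\|y\|_p^s:T(B_X)\subseteq E_y(B_{\ell_{p'}})\}$, I would unwind the definition of $\kappa_p(T)$ the other way: by definition of the operator-ideal norm, for any $\e>0$ there is a factorization realizing $\kappa_p(T)$ up to $\e$, i.e.\ an element $y\in\ell_p^s(Y)$ together with the factorization $T=\widetilde{E_y}\circ T_y$, $\|T_y\|\le 1$, and $\|y\|_p^s\le\kappa_p(T)+\e$; this $y$ then satisfies $T(B_X)=\widetilde{E_y}(T_y(B_X))\subseteq\widetilde{E_y}(B_{\ell_{p'}/N_y})\subseteq E_y(B_{\ell_{p'}})$, where the last inclusion uses that the quotient map $\ell_{p'}\to\ell_{p'}/N_y$ maps $B_{\ell_{p'}}$ onto $B_{\ell_{p'}/N_y}$ and that $\widetilde{E_y}$ lifts to $E_y$ on $\ell_{p'}$. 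Hence $y$ belongs to the set over which we take the infimum, so that infimum is $\le\kappa_p(T)+\e$; letting $\e\to 0$ finishes.

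Combining the two inequalities gives the claimed identity. The main obstacle, such as it is, is purely bookkeeping: one must be careful that the quantity appearing in the definition of $\kappa_p(T)$ in \cite{SK,SK1} (an infimum over factorizations through quotients of $\ell_{p'}$, weighted by $\|y\|_p^s$) is literally the same as the infimum over all $y\in\ell_p^s(Y)$ with $T(B_X)\subseteq E_y(B_{\ell_{p'}})$, with no loss in the constant. This amounts to checking that (i) every $y$ dominating $T(B_X)$ produces a factorization with factorization-norm $\le\|y\|_p^s$, and (ii) conversely every factorization produces such a $y$ with the same norm — both of which are immediate from the construction of $N_y$, $T_y$, and $\widetilde{E_y}$ recalled above. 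No case analysis on $p$ is needed, and reflexivity/strict convexity of $\ell_{p'}$ (which mattered in Theorem~\ref{P2}) is irrelevant here; one only needs that quotient maps are metric surjections.
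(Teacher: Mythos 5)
You should first note that the paper itself does not prove this statement at all: it is imported verbatim as \cite[Proposition~3.15]{DPS} (the sentence before the theorem reads ``Also, recall the following from [DPS, Proposition 3.15]''). So there is no proof in the paper to match your argument against, and any proof you supply is genuinely new content relative to this manuscript.

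That said, your argument has a real gap, and it sits exactly where the content of the proposition lies. Both of your inequalities are justified by an appeal to ``the definition of $\kappa_p$'' as an infimum of $\|y\|_p^s$ over factorizations $T=\widetilde{E_y}\circ T_y$ with $\|T_y\|\leq 1$. But the paper never writes down that definition (it only says vaguely that $\kappa_p(T)$ ``depends on the factorization of $T$ through a quotient space of $\ell_q$''), and in the source literature \cite{SK,SK1} the ideal norm is \emph{not} taken to be the right-hand side of the statement --- if it were, the statement would be a tautology and \cite{DPS} would not record it as a proposition. The quantity naturally attached to the factorization $T=\widetilde{E_y}\circ T_y$ is controlled by $\|\widetilde{E_y}\|$, which is dominated by $\|y\|_p^w$ rather than $\|y\|_p^s$, and these norms genuinely differ; showing that the ideal norm built from such factorizations coincides, with constant exactly $1$, with $\inf\{\|y\|_p^s:T(B_X)\subseteq E_y(B_{\ell_{p'}})\}$ is precisely the ``bookkeeping'' you defer as immediate. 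Until you fix a precise definition of $\kappa_p$ and carry out that comparison, the proof is circular. Two smaller points: your inclusion $\widetilde{E_y}(B_{\ell_{p'}/N_y})\subseteq E_y(B_{\ell_{p'}})$ for \emph{closed} balls does use proximinality of $N_y$ in $\ell_{p'}$ (i.e.\ reflexivity), contrary to your closing remark that only metric surjectivity of quotient maps is needed (though an $\varepsilon$ of slack would absorb this); and the case $p=1$, $p'=\infty$, where $E_y$ is defined on $c_0$ rather than $\ell_{p'}$, needs to be addressed separately since the theorem is asserted for all $p\geq 1$ while the factorization machinery you quote is set up for $1<p<\infty$.
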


\begin{thm}\label{T1}
Assume $T\in B(X,Y)$ is of finite rank, then $\lim\limits_{p\to\infty}\kappa_p(T)=\|T\|.$
\end{thm}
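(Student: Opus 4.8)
The plan is to combine the two facts recalled immediately before the statement. By Remark~\ref{R1}, since $T$ has finite rank we have $\|T\|=\inf\{\|y\|_\iy: T(B_X)\ci E_y(B_{\ell_1}),\ y\in c_{00}^s(Y)\}$, and by \cite[Proposition~3.15]{DPS} for each $p$ we have $\ka_p(T)=\inf\{\|y\|_p^s: T(B_X)\ci E_y(B_{\ell_{p'}})\}$. So I want to show these two infima converge to one another as $p\to\iy$. The key auxiliary observation is that a finitely supported $y=(y_1,\dots,y_n)\in c_{00}^s(Y)$ can serve as a certificate of $p$-compactness for \emph{every} $p$: if $T(B_X)\ci E_y(B_{\ell_1})$, then since $B_{\ell_1(n)}\ci B_{\ell_{p'}(n)}$ for all $p'\geq 1$, we also get $T(B_X)\ci E_y(B_{\ell_{p'}})$, so $y$ witnesses $p$-compactness of $T$ for each $p$. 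This lets us compare $\ka_p(T)$ against $\|y\|_p^s$ for such a $y$.

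First I would establish $\limsup_{p\to\iy}\ka_p(T)\leq\|T\|$. Fix $\e>0$ and pick $y=(y_i)_{i=1}^n\in c_{00}^s(Y)$ with $T(B_X)\ci E_y(B_{\ell_1})$ and $\|y\|_\iy<\|T\|+\e$ (using Remark~\ref{R1}). Then for every $p$, $y$ witnesses that $T$ is $p$-compact, so $\ka_p(T)\leq\|y\|_p^s=\big(\sum_{i=1}^n\|y_i\|^p\big)^{1/p}\leq n^{1/p}\|y\|_\iy$. Since $n$ is fixed, $n^{1/p}\to1$, so $\limsup_p\ka_p(T)\leq\|y\|_\iy<\|T\|+\e$; letting $\e\to0$ gives $\limsup_p\ka_p(T)\leq\|T\|$. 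Next I would establish $\liminf_{p\to\iy}\ka_p(T)\geq\|T\|$. This direction should follow from the general fact (stated in the excerpt's preliminaries) that $\ell_p^s(Y)\ci\ell_q^s(Y)$ and more relevantly that $p$-compactness implies $q$-compactness for $p<q$ together with $\ka_q(T)\leq\ka_p(T)$; but more directly, for any $y\in\ell_p^s(Y)$ witnessing $T(B_X)\ci E_y(B_{\ell_{p'}})$, we have $T(B_X)\ci E_y(B_{\ell_{p'}})$ and each element of $E_y(B_{\ell_{p'}})$ has norm at most $\|(\|y_i\|)_i\|_{p}$ (by Hölder, $\|\sum\al_iy_i\|\leq\|(\al_i)\|_{p'}\|(\|y_i\|)_i\|_p\leq\|(\|y_i\|)_i\|_p$); hence $\|T\|\leq\|y\|_p^s$, and taking the infimum over such $y$ gives $\|T\|\leq\ka_p(T)$ for \emph{every} $p$. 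In particular $\liminf_p\ka_p(T)\geq\|T\|$.

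Combining the two bounds yields $\lim_{p\to\iy}\ka_p(T)=\|T\|$, which is the claim. I would remark that the inequality $\|T\|\leq\ka_p(T)$ for all $p$ is elementary and that the content of the theorem is really the $\limsup$ direction, whose proof rests on two ingredients: (i) that a \emph{finitely supported} certificate works uniformly in $p$, and (ii) the elementary norm comparison $\|y\|_p^s\leq n^{1/p}\|y\|_\iy$ on $c_{00}^s(Y)$ for a fixed support size $n$, together with Remark~\ref{R1}'s identification of $\|T\|$ as an infimum of such sup-norms. The main obstacle, if any, is making sure Remark~\ref{R1} is applied correctly — specifically, that the infimum there is attained up to $\e$ by a certificate with \emph{finite} support of some size $n$ depending on $\e$, so that $n^{1/p}\to1$ can be invoked after fixing $\e$ (and hence $n$); one cannot let $n\to\iy$ and $p\to\iy$ simultaneously without care. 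Since $T$ has finite rank, Lemma~\ref{L1} guarantees such finitely supported certificates exist, so this is not a genuine difficulty.
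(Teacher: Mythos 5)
Your proposal is correct and follows essentially the same route as the paper's proof: both use Remark~\ref{R1} to produce a finitely supported certificate $y$ with $\|y\|_\iy<\|T\|+\e$, observe that this certificate bounds $\ka_p(T)$ by $\|y\|_p^s$ for every $p$ (the paper via $\|y\|_p\to\|y\|_\iy$, you via the equivalent bound $n^{1/p}\|y\|_\iy$), and finish with the lower bound $\|T\|\leq\ka_p(T)$ (which the paper cites as known monotonicity and you verify directly by H\"older). Your write-up is somewhat more explicit about why a finitely supported certificate works uniformly in $p$, but the argument is the same.
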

\begin{proof}
 Let $\varepsilon>\delta>0$. Then by Remark~\ref{R1}, we can choose $y\in c_{00}^s(Y)$ such that $\|y\|_\infty<\|T\|+\varepsilon-\delta$. Since $\|y\|_p\to\|y\|_\infty$ as $p\to\infty$, hence for $\delta>0$, choose $p$ such that $\|y\|_p<\|y\|_\infty+\delta$. This implies that $\|y\|_p<\|T\|+\varepsilon$ and hence $\kappa_p(T)<\|T\|+\varepsilon$. Now the proof follows from the fact that $\|T\|\leq \kappa_q(T)\leq\kappa_p(T)$ for $1\leq p<q<\infty$.  
\end{proof}

\begin{thm}\label{T5}
Let $X$ be a real Banach space, $p>1$ and $\e>0$. Suppose that for all $q\geq p$ and every operator $T:Y\ra X$ with $\dim T(X) \leq 3$ there exists an extension $\widetilde{T}:Z\ra X$ where $Z\supseteq Y$ and $\dim Z/Y=1$ such that $\ka_q(\widetilde{T})\leq (1+\e)\ka_q(T)$. Then $X$ is an $L_1$-predual.
\end{thm}

\begin{proof}
From the assumption on $T$, it follows from Theorem~\ref{T1} that $\|\widetilde{T}\|\leq (1+\e)\|T\|$. The result now follows from \cite[Theorem~5.4]{JL}.
\end{proof}

Theorem~\ref{T5} gives a partial converse to Corollary~\ref{C1}.

\begin{cor}\label{C4}
Let $X$ be a Banach space, $p>1$ and $\e>0$. Let $Y, Z$ be Banach spaces such that $Z\supseteq Y$.
Suppose that for all $q>p$, $T\in K_q(Y,X)$ has an extension $\widetilde{T}\in K_q(Z,X)$ such that $\ka_q(\widetilde{T})\leq (1+\e)\ka_q(T)$. Then for all compact (weakly compact) $T:Y\ra X$, there exists a compact (weakly compact) extension $\widetilde{T}:Z\ra X$ such that $\|\widetilde{T}\|= \|T\|$.
\end{cor}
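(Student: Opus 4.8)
The plan is to deduce Corollary~\ref{C4} by combining the hypothesis on $K_q$-extensions with Theorem~\ref{T5} and Corollary~\ref{C1}. First I would observe that the hypothesis of Corollary~\ref{C4} is stated for \emph{arbitrary} $T\in K_q(Y,X)$ with $q>p$; in particular it applies to every finite rank operator $T:Y\to X$, since finite rank operators are $q$-compact for all $q$. Restricting to operators with $\dim T(Y)\leq 3$ and enlarging only by one dimension, we get exactly the hypothesis of Theorem~\ref{T5} (with $p$ replaced by any fixed value larger than the given $p$, which is harmless). Hence Theorem~\ref{T5} yields that $X$ is an $L_1$-predual.

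The second step is to use the structural consequences of $X$ being an $L_1$-predual. Since every $L_1$-predual has the MAP (as recalled in the excerpt), $X$ has the MAP. Moreover, an $L_1$-predual $X$ enjoys the property that every compact operator into $X$ admits a norm-preserving compact extension: this is a standard consequence of \cite[Theorem~2.1]{JL} (the equivalence between the $L_1$-predual property and the relevant extension/intersection property of balls), which we are assuming holds for complex scalars as well. So for any $Z\supseteq Y$, every compact $T:Y\to X$ has a compact extension $\widetilde T:Z\to X$ with $\|\widetilde T\|=\|T\|$, and likewise in the weakly compact case (using that weakly compact operators into an $L_1$-predual factor appropriately, or by the same ball-intersection argument applied to the weakly compact setting). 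This is precisely the desired conclusion.

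Alternatively, and perhaps more in the spirit of the surrounding results, once we know $X$ has the MAP one could instead invoke Corollary~\ref{C1} in reverse: the $K_q$-extension hypothesis for all $q>p$ forces, via Theorem~\ref{P2}'s factorization machinery run backwards together with Theorem~\ref{T1} (so that $\ka_q\to\|\cdot\|$ on finite rank operators), the norm-preserving extension property for compact operators. I would present the cleaner route through the $L_1$-predual characterization.

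The main obstacle I anticipate is the passage from the $q$-compact extension hypothesis, which only controls $\ka_q(\widetilde T)$, to a genuine \emph{norm} bound $\|\widetilde T\|\le(1+\e)\|T\|$ on finite rank operators — this is exactly what Theorem~\ref{T1} is designed to supply, by letting $q\to\infty$ along the inequalities $\|T\|\le\ka_q(\widetilde T)\le(1+\e)\ka_q(T)$ and $\ka_q(T)\to\|T\|$. A secondary point requiring care is that we want the $\e$ to be removable in the final statement: since $\e>0$ is arbitrary in the hypothesis (or can be taken arbitrarily small, as the hypothesis quantifies over a fixed $\e$), Theorem~\ref{T5} still produces an honest $L_1$-predual, and $L_1$-preduals give \emph{exactly} norm-preserving extensions, so the $(1+\e)$ collapses to equality $\|\widetilde T\|=\|T\|$ in the conclusion. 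I would also remark that the weakly compact case needs the analogue of the norm-preserving extension for weakly compact operators into $L_1$-preduals, which follows since such operators are still governed by the same finite-dimensional ball intersection data.
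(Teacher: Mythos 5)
Your proposal is correct and follows exactly the route the paper intends (Corollary~\ref{C4} is stated without proof precisely because it is meant to be read off from Theorem~\ref{T5}: the $K_q$-extension hypothesis applied to rank~$\leq 3$ operators and one-codimensional enlargements makes $X$ an $L_1$-predual, and Lindenstrauss's equivalence then upgrades this to exact norm-preserving compact, resp.\ weakly compact, extensions into $X$). The one correction: the norm-preserving extension of compact operators into an $L_1$-predual is part of the equivalence in \cite[Theorem~5.4]{JL} (the same result invoked in the proof of Theorem~\ref{T5}), not \cite[Theorem~2.1]{JL}, which concerns operators \emph{defined on} $X$ and the $P_\la$ property of $X^{**}$.
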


Similar to $\ell_p^s(X)$ as stated in Section~2, we define $\oplus_{c_0}Y_n=\{(y_n): y_n\in Y_n, \lim_n\|y_n\|=0\}$, for a family of Banach spaces $(Y_n)_{n=1}^\iy$.

\begin{thm}\label{T9}
\bla
\item Let $X$ be a Banach space such that every $T\in K_p(Y,X)$ has an extension $\widetilde{T}\in K_p(Z,X)$, where $Z\supseteq Y$. Then there is a constant $\eta$ so that for every such $Y,Z$ and $T$ there exists a $p$-compact extension $\widetilde{T}$ with $\ka_p(\widetilde{T})\leq \eta\ka_p(T)$ for $1\leq p\leq\iy$.
\item Let $X$ be a Banach space such that every $T\in K_p(X,Y)$ has an extension $\widetilde{T}\in K_p(Z,Y)$, where $(Z\supseteq X)$. Then there is a constant $\eta$ such that for every such $Y,Z$ and $T$ there is a $p$-compact extension $\widetilde{T}$ with $\ka_p(\widetilde{T})\leq \eta\ka_p(T)$ for $1<p\leq\iy$.
\el
\end{thm}
\begin{proof}
$(a).$ Suppose no such $\eta$ exists. Then for every $n$ there are spaces $Z_n\supseteq Y_n$ and a $p$-compact operator $T_n$ from $Y_n$ to $X$ with $\ka_p(T_n)=1$ such that any $p$-compact extension $\widetilde{T_n}$ of $T_n$ from $Z_n$ to $X$ satisfies $\ka_p(\widetilde{T_n})\geq n^3$. Let $Y=\oplus_{c_0}Y_n$ and define $T:Y\to X$ by $T=\sum_{n=1}^\iy \frac{T_n'}{n^2}$, 
where $T_n':Y\ra X$ is defined by $T_n'\big((y_1,\ldots,y_n,\ldots)\big)=T_n(y_n)$. Since $T_n'(B_Y)=T_n(B_{Y_n})$, it follows that $\ka_p(T_n')=\ka_p(T_n)=1$.

Now we have, $\ka_p(T)\leq\sum\frac{\ka_p(T_n')}{n^2}=\sum\frac{1}{n^2}<\iy$.
Hence $T\in K_p(Y,X)$. Let $\widetilde{T}$ be a $p$-compact extension of $T$ from $\oplus_{c_0}Z_n$ to $X$. Then the restriction of $n^2\widetilde{T}$ to $Z_n$ (i.e. to the sequences $(0,...,z_n,0,...)$) is an extension of $T_n$. From our assumption $\ka_p(n^2\widetilde{T})\geq n^3$, which leads to $\ka_p(\widetilde{T})\geq n$, for all $n$, which is a clear contradiction.

$(b).$ We first claim the following.

{\sc Claim:~} There exists a $P_1$-space $W$, $W \supseteq X$, such that for any Banach space $Y$ and $T \in K_p(X,Y)$ there exists an extension $\widetilde{T} \in K_p(W,Y)$ such that $\ka_p(\widetilde{T}) \leq \eta \kappa_p(T)$.

Suppose that no such $\eta$ exists. Then, for every $n$ there exists a $P_1$ space $W_n \supseteq X$, a Banach space $Y_n$ and a $p$-compact operator $T_n$ from $X$ to $Y_n$ with $\ka_p(T_n) = 1$ such that any $p$-compact extension $\widetilde{T_n}$ of $T_n$ from $W_n$ to $Y_n$ satisfies $\ka_p(\widetilde{T_n}) \geq n^3$. Let $Y=\oplus_{c_0}Y_n$ and consider $T_n:X\to Y$ as each $Y_n$ is a subspace of $Y$. Now define $T : X \to Y$ by $T = \sum_{n=1}^{\iy} \frac{T_n}{n^2}$.
Clearly, $T \in K_p(X,Y)$ because each $T_n \in K_p(X,Y)$. Now observe that $\bigoplus_{\ell_\iy} W_n = W$ is a $P_1$ space as each $W_i$ is a $P_1$ space and $W$ contains $X$. By our hypothesis there exists a $p$-compact extension $\widetilde{T}$ of $T$ from $W$ to $Y$. Then the restriction of $n^2 \widetilde{T}$ to $W_n$ is an extension of $T_n$. From our assumption $\ka_p(n^2 \widetilde{T}) \geq n^3$, which implies that $\ka_p(\widetilde{T}) \geq n$, for all $n$, which is a clear contradiction. Thus the claim follows.

Next, assume that $Z$ is a Banach space and $Z \supseteq X$ and let $T \in K_p(X,Y)$. From the above claim, there exists an extension $\widetilde{T} \in K_p(W,Y)$. Moreover, the identity $I : X \to X$ has an extension $\widetilde{I} : Z \to W$ with $\|\widetilde{I}\| = 1$, which follows from the property of $P_1$-space. Clearly $\widetilde{T} \circ \widetilde{I} : Z \to Y$ is a $p$-compact extension of $T$ and finally $\ka_p(\widetilde{T} \circ \widetilde{I}) \leq \ka_p(\widetilde{T}) \|\widetilde{I}\| \leq \eta \ka_p(T)$.
\end{proof}

\begin{rem}
Theorem~\ref{T9} also holds if we replace the $p$-compact operator by a weakly $p$-compact operator. 
\end{rem}

In the next result, it is observed that in some cases, to obtain an extension of a $p$-compact operator $T$, it suffices to find a $p$-compact operator $S$ that is close to $T$ in the sense of the $\ka_p$-norm, not necessarily an extension of $T$.

\begin{prop}\label{P1}
For a Banach space $X$ and $1\leq p\leq\iy$, \tFAE.  
\bla
\item For every Banach space $Y$, every $T\in K_p(Y,X)$ and every $\e>0$, there exists $\widetilde{T}\in K_p(Z,X)$, where $Z\supseteq Y$ such that $\ka_{p}(\widetilde{T})\leq (\la+\e)\,\ka_{p}(T)$ and $\ka_{p}(\widetilde{T}|_{Y}-T)\leq\e$.
\item For every Banach space $Y$, every $T\in K_p(Y,X)$, and every $\e>0$ there exists an extension $\widetilde{T}\in K_p(Z,X)$, where $Z\supseteq Y$ such that $\ka_{p}(\widetilde{T})\leq(\la+\e)\ka_{p}(T)$.
\el
\end{prop}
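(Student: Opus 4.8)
The plan is to prove $(b)\Rightarrow(a)$ trivially and $(a)\Rightarrow(b)$ by a successive-approximation argument in which the infinitely many approximate extensions furnished by $(a)$ are glued into a single exact extension by means of a weighted $c_0$-direct sum. For $(b)\Rightarrow(a)$: given $Y$, $T\in K_p(Y,X)$ and $\e>0$, the extension $\widetilde T\in K_p(Z,X)$ supplied by $(b)$ satisfies $\widetilde T|_Y-T=0$, hence $\ka_p(\widetilde T|_Y-T)=0\le\e$, while $\ka_p(\widetilde T)\le(\la+\e)\ka_p(T)$ is exactly the bound required in $(a)$.

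For $(a)\Rightarrow(b)$, fix $Y$, $T\in K_p(Y,X)$, $\e>0$; we may assume $\ka_p(T)>0$, since otherwise $T=0$ and the zero operator on any $Z\supseteq Y$ works. I would fix a rapidly decreasing sequence of positive reals $\eta_n\downarrow 0$, taken small enough in a sense fixed at the end (one may take $\eta_n=\e\,\ka_p(T)/(2^{2n+2}(\la+\ka_p(T)+1))$), and iterate $(a)$ as follows: set $T_0:=T$, and, given $T_{n-1}\in K_p(Y,X)$, apply $(a)$ to $T_{n-1}$ with parameter $\eta_n$ to obtain a Banach space $Z_n\supseteq Y$ and $S_n\in K_p(Z_n,X)$ with $\ka_p(S_n)\le(\la+\eta_n)\ka_p(T_{n-1})$ and $\ka_p(S_n|_Y-T_{n-1})\le\eta_n$; then put $T_n:=T_{n-1}-S_n|_Y$. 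Since a restriction of a $p$-compact operator is again $p$-compact with no larger $\ka_p$-norm (apply the operator ideal property to $S_n|_Y=S_n\circ(Y\hookrightarrow Z_n)$), we get $T_n\in K_p(Y,X)$ with $\ka_p(T_n)\le\eta_n\to0$, and telescoping yields $\sum_{n\ge1}S_n|_Y=T$ with convergence in $\ka_p$-norm. With the weights $\alpha_n:=2^{-(n-1)}$, the bounds $\ka_p(S_1)\le(\la+\eta_1)\ka_p(T)$ and $\ka_p(S_n)\le(\la+\eta_n)\eta_{n-1}$ for $n\ge2$ show that, by choosing the $\eta_n$ sufficiently small, one can arrange $\sum_{n\ge1}\alpha_n^{-1}\ka_p(S_n)\le(\la+\e)\ka_p(T)$; this (routine) bookkeeping I would carry out last.

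I would then assemble the extension. Set $Z:=\oplus_{c_0}Z_n$ and let $\pi_n\colon Z\to Z_n$ be the coordinate projections, so $\|\pi_n\|=1$; define $\widetilde T:=\sum_{n\ge1}\alpha_n^{-1}\,S_n\circ\pi_n$. By the ideal property $\ka_p(\alpha_n^{-1}S_n\pi_n)\le\alpha_n^{-1}\ka_p(S_n)$, so the series converges absolutely in the Banach operator ideal $K_p(Z,X)$; hence $\widetilde T\in K_p(Z,X)$ and $\ka_p(\widetilde T)\le\sum_{n\ge1}\alpha_n^{-1}\ka_p(S_n)\le(\la+\e)\ka_p(T)$. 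Embed $Y$ into $Z$ by $\iota(y):=(\alpha_n y)_n$; this lies in $\oplus_{c_0}Z_n$ because $\alpha_n\to0$, and it is isometric because $\sup_n\alpha_n=\alpha_1=1$. Identifying $Y$ with $\iota(Y)$, one has $\widetilde T\iota(y)=\sum_{n\ge1}\alpha_n^{-1}S_n(\alpha_n y)=\sum_{n\ge1}S_n|_Y(y)=T(y)$, so $\widetilde T$ is an extension of $T$ with $Z\supseteq Y$; together with the norm estimate, this is precisely $(b)$.

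The step I expect to be the crux is the gluing. One cannot simply amalgamate the $Z_n$ over $Y$, because an operator on such an amalgam restricts to a single operator on $Y$, whereas here $T=\sum_n S_n|_Y$ is a genuine infinite sum of the restrictions; the weighted diagonal embedding into $\oplus_{c_0}Z_n$ is what reconciles the demand that $Y$ sit isometrically in $Z$ (which favours a sup-type norm) with the demand that $\widetilde T(B_Z)$ be relatively $p$-compact (which forces the coordinates to decay), and the factors $\alpha_n^{-1}$ do not spoil the norm estimate exactly because the iteration is run so that $\ka_p(S_n)$ decays faster than $\alpha_n$. Apart from this, the argument is uniform in $p\in[1,\iy]$, using only that $K_p$ is a Banach operator ideal and that $\|\cdot\|\le\ka_p(\cdot)$.
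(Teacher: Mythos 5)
Your $(b)\Rightarrow(a)$ is fine, and your iteration producing the operators $S_n$ with $\ka_p(S_n)\le(\la+\eta_n)\ka_p(T_{n-1})$ and $\ka_p(S_n|_Y-T_{n-1})\le\eta_n$ is exactly the right engine. The gap is in how you read the quantifier on $Z$, and it derails the second half of the argument. In both $(a)$ and $(b)$ the superspace $Z\supseteq Y$ is meant to be \emph{given} (i.e.\ universally quantified): for every $Y$, every $Z\supseteq Y$, every $T$ and every $\e$, there is an (approximate, resp.\ exact) extension \emph{to that $Z$}. This is forced by the content of the statement: under your existential reading, $(b)$ is witnessed trivially by $Z=Y$ and $\widetilde T=T$, so the proposition would say nothing; and the paper's proof begins ``Let $Z\supseteq Y$, $\e>0$, and $T$ be given,'' confirming the universal reading (compare also Theorem~\ref{T9}, phrased identically, where $Z$ is manifestly arbitrary). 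Your construction lets each application of $(a)$ produce its own $Z_n$ and then manufactures a brand-new superspace $\oplus_{c_0}Z_n$ with a weighted diagonal copy of $Y$ inside it. The resulting $\widetilde T$ is an extension of $T$ to \emph{that} particular space, not to the originally prescribed $Z$, so it does not establish $(b)$; and the statement it does establish is the vacuous one. There is no way to transport your $\widetilde T$ back to the given $Z$.

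The repair is simpler than your gluing, and it is what the paper does: since $(a)$ gives an approximate extension to \emph{any} prescribed superspace, apply it every time to the \emph{same} fixed $Z$. You then obtain $\widetilde T_n\in K_p(Z,X)$ (your $S_n$, all living on $Z$) with $\ka_p(\widetilde T_n)\le(\la+1)\e/2^{n-1}$ for $n\ge2$, and the series $\widetilde T=\sum_n\widetilde T_n$ converges in $\ka_p$-norm in $K_p(Z,X)$; the telescoping you already wrote down gives $\widetilde T|_Y=T$, and $\ka_p(\widetilde T)\le(\la+\e)\ka_p(T)+(\la+1)\e$, which yields $(b)$ after shrinking $\e$ (your bookkeeping with the $\eta_n$ achieves the same thing). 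No direct sum, no re-embedding of $Y$, and no need for the weights $\al_n$ is required once all the correcting operators are defined on the same $Z$.
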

\begin{proof}
It remains to prove $(a)\Ra (b)$.

Let $Z\supseteq Y,\;\e>0$, and $T\in K_p(Y,X)$ be given. By $(a)$, there exists $\widetilde{T}_1\in K_p(Z,X)$ satisfying the following conditions.
\begin{equation}\label{eq4}
   \ka_p(\widetilde{T}_1)\leq\,(\la+\e)\,\ka_p(T),\quad \ka_p(\widetilde{T}_1|_Y-T)<\frac{\e}{2}.
\end{equation}
Now, $T-\widetilde{T}_1|_Y\in K_p(Y,X)$  so by $(a)$, there exists  $\widetilde{T}_2\in K_p(Z,X)$, satisfying
\[   
   \ka_p(\widetilde{T}_2)\leq\,(\la+1)\,\ka_p(T-\widetilde{T}_1|_Y),\quad \ka_p\big(\widetilde{T}_2|_Y-(T-\widetilde{T}_1|_Y)\big)<\frac{\e}{2^2}.
\]  
Proceeding inductively, we obtain a sequence $(\widetilde{T}_n)\ci K_p(Z,X)$ such that the inequality (\ref{eq4}) holds for $n=1$, and for $n\geq 2$ we have,
\[
    \ka_p(\widetilde{T}_n)\leq\,(\la+1)\,\ka_p\Big(T-(\widetilde{T}_1+\widetilde{T}_2+...+\widetilde{T}_{n-1})|_Y\Big),
\]
\begin{equation}\label{eq12}
    \ka_p\Big(\widetilde{T}_n|_Y-\big(T-(\widetilde{T}_1+\widetilde{T}_2+...+\widetilde{T}_{n-1})|_Y\big)\Big)<\frac{\e}{2^n}.
\end{equation}

Hence, for $n\geq 2$, we have $\ka_p(\widetilde{T}_n)\leq (\la+1)\e/2^{n-1}$. Therefore, the series $\sum\limits_{n=1}^\iy \widetilde{T}_n$ converges in the ($\ka_p$) norm topology to an operator $\widetilde{T}\in K_p(Z,X)$ satisfying $\widetilde{T}|_Y=T$. In fact, by inequality (\ref{eq12}) for $\de>0$, there exists $m$ such that $\ka_p((T-(\widetilde{T_1}+\widetilde{T_2}+\ldots+\widetilde{T_m})|_Y))<\de$.
   \[   
\mbox{Moreover,~}\ka_p(\widetilde{T})\leq \ka_p(\widetilde{T}_1)+\sum_{n=2}^\iy (\la+1)\e/2^{n-1}\leq (\la+\e)\ka_p(T)+(\la+1)\e.
\]
Since $\e>0$ is arbitrary, $(b)$ follows. 
\end{proof}

\section{Extension of $T\in K_p^d(X,Y)$}

Similar to Section~2, in this section we assume that $X, Y$ are Banach spaces. Let $(A,\al)$ be an operator ideal.
Recall the definition of the dual operator ideal $(A^d,\al^d)$ with respect to the spaces $X, Y$ as discussed in Section~2. We now recall the following result from \cite[Theorem 2.13]{JD} which will be required to derive our next observation. In this section, we mean $1\leq p\leq\iy$ when no choice of $p$ is mentioned.

\begin{thm}\cite[Theorem 2.13]{JD}\label{T8}
Let $1\leq p<\iy$, let $X$ and $Y$ be Banach spaces and $K$ be $w^*$-compact norming subset of $B_{X^*}$. For every operator $T:X\ra Y$, the following are equivalent:
\bla
\item $T$ is $p$-summing.
\item There exists a regular Borel probability measure $\mu$ on $K$, a closed subspace $X_p$ of $L_p(\mu)$ and an operator $\hat{T}:X_p\to Y$ such that 
\bln
\item $j_pi_X(X)\ci X_p$ and 
\item $\hat{T}j_pi_X(x)=Tx$ for all $x\in X$. In other words, the following diagram commutes.
\el
\begin{center}

\begin{tikzcd}
X \arrow[r, "T"] \arrow[d, "i_X"']        & Y                                         \\
i_X(X) \arrow[d, hook] \arrow[r, "j_p^X"] & X_p \arrow[u, "\hat{T}"'] \arrow[d, hook] \\
C(K) \arrow[r, "j_p"]                     & L_p(\mu)                                 
\end{tikzcd}
\end{center}

\item There exists a probability space $(\Omega,\Sigma,\mu)$ and operators $\hat{T}:L_p(\mu)\ra \ell_{\iy}(B_{Y^*})$ and $v:X\ra L_{\iy}(\mu)$ such that the following diagram commutes.

\begin{center}
\begin{tikzcd}
X \arrow[rr, "T"] \arrow[dd, "v"] &  & Y \arrow[rd, "i_Y", shift left]                &                                 \\
                                  &  &                                                &    \quad \ell_{\infty}(B_{Y^*}) \\
L_{\infty}(\mu) \arrow[rr, "i_p"] &  & L_p(\mu) \arrow[ru, "\hat{T}"', shift right=2] &                                
\end{tikzcd}
\end{center}

In addition, we may arrange $v$ such that $\|v\|=1$ and $\hat{T}$ such that $\|\hat{T}\|=\pi_p(T)$.
\el
\end{thm}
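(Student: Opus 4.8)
The plan is to prove $(a)\Rightarrow(b)\Rightarrow(a)$ together with $(a)\Leftrightarrow(c)$, all of which reduce, after some routine factorization bookkeeping, to the \emph{Pietsch domination inequality}: if $T$ is $p$-summing with $c=\pi_p(T)$, then there is a regular Borel probability measure $\mu$ on $K$ such that $\|Tx\|\le c\big(\int_K|\langle x^*,x\rangle|^p\,d\mu(x^*)\big)^{1/p}$ for all $x\in X$. Establishing this inequality by a Hahn--Banach separation is the only genuine content; everything else is diagram chasing.

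First I would record two elementary facts. Since $K$ is $w^*$-compact and norming, the evaluation map $i_X\colon X\to C(K)$, $i_X(x)=(\langle x^*,x\rangle)_{x^*\in K}$, is an isometric embedding, and likewise $i_Y\colon Y\to\ell_\infty(B_{Y^*})$ is isometric; moreover, for any probability measure $\mu$ the formal inclusions $j_p\colon C(K)\to L_p(\mu)$ and $i_p\colon L_\infty(\mu)\to L_p(\mu)$ are $p$-summing with $\pi_p(\,\cdot\,)\le1$, because $\sum_i\|g_i\|_{L_p(\mu)}^p=\int\sum_i|g_i|^p\,d\mu\le\sup_t\sum_i|g_i(t)|^p$ and point evaluations (Dirac measures) lie in the dual unit ball. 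Granting these, the ``easy'' implications are immediate from the ideal property: in $(b)$, $T=\hat{T}\circ j_p\circ i_X$ is $p$-summing with $\pi_p(T)\le\|\hat{T}\|$; in $(c)$, $i_YT=\hat{T}\circ i_p\circ v$ gives $\pi_p(i_YT)\le\|\hat{T}\|\,\pi_p(i_p)\,\|v\|\le\|\hat{T}\|\,\|v\|$, and $\pi_p(T)=\pi_p(i_YT)$ since $i_Y$ is isometric.

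For the domination inequality, assume $(a)$ and put $c=\pi_p(T)$. To each finite tuple $\sigma=(x_1,\dots,x_n)$ in $X$ associate $g_\sigma\in C(K)$ by $g_\sigma(x^*)=\sum_{i=1}^n\big(\|Tx_i\|^p-c^p|\langle x^*,x_i\rangle|^p\big)$. The set $\mathcal G=\{g_\sigma\}$ is a convex cone containing $0$ (concatenating tuples adds the functions; rescaling a tuple by $\lambda^{1/p}$ scales its function by $\lambda$), and by the definition of $\pi_p$ each $g_\sigma$ attains a value $\le 0$ on $K$, so $\mathcal G$ is disjoint from the open convex cone $P=\{f\in C(K):\inf_Kf>0\}$. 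Separating $\mathcal G$ from $P$ by Hahn--Banach produces a nonzero $\mu\in C(K)^*$ and $\alpha\in\mathbb R$ with $\mu\le\alpha$ on $\mathcal G$ and $\mu\ge\alpha$ on $P$; the cone structure of $\mathcal G$ forces $\alpha\ge0$ and $\mu\le0$ on $\mathcal G$, while $f+\e\mathbf 1\in P$ for every continuous $f\ge0$ forces $\mu\ge0$ on the nonnegative functions, so by the Riesz representation theorem $\mu$ is, after normalization, a regular Borel probability measure on $K$. Applying $\mu(g_{(x)})\le0$ to singleton tuples yields the domination inequality.

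Finally, from domination I would build $(b)$ and $(c)$. For $(b)$: set $X_p=\overline{j_p i_X(X)}$ inside $L_p(\mu)$; domination reads $\|T(x-x')\|\le c\,\|j_p i_X(x)-j_p i_X(x')\|_{L_p(\mu)}$, so $j_p i_X(x)\mapsto Tx$ is a well-defined linear map of norm $\le c$ whose continuous extension $\hat{T}\colon X_p\to Y$ satisfies $\hat{T}j_p i_X=T$, making the diagram commute. For $(c)$: take $K=B_{X^*}$, define $v\colon X\to L_\infty(\mu)$ by $v(x)=\langle\,\cdot\,,x\rangle$, so $\|v\|\le1$; on $i_p v(X)\subseteq L_p(\mu)$ the map $i_p v(x)\mapsto i_Y Tx$ is well-defined of norm $\le c$ by domination, and since $\ell_\infty(B_{Y^*})$ is injective (a $P_1$-space) it extends by continuity to $\overline{i_p v(X)}$ and then to all of $L_p(\mu)$ without increasing the norm, giving $\hat{T}\colon L_p(\mu)\to\ell_\infty(B_{Y^*})$ with $\|\hat{T}\|\le c$ and $i_YT=\hat{T}i_p v$; together with the lower bound $c=\pi_p(T)\le\|\hat{T}\|\,\pi_p(i_p)\,\|v\|$ from the second paragraph and $\pi_p(i_p)\le1$, this forces $\|\hat{T}\|=\pi_p(T)$ and $\|v\|=1$ (the case $T=0$ being trivial). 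The one genuinely delicate step is the separation: one must separate $\mathcal G$ from the \emph{open} cone $P$ so that Hahn--Banach yields a nonzero functional, and then extract that it is a \emph{nonzero positive} measure from the cone property and from $P$ containing arbitrarily small positive functions; all the rest is the two $\pi_p\le1$ estimates, the injective-extension lemma, and bookkeeping with the commuting diagrams.
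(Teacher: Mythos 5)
Your proposal is correct, and it is essentially the standard proof of the Pietsch domination/factorization theorem from the cited source: the paper itself offers no proof here, quoting the result directly as \cite[Theorem 2.13]{JD}, and your Hahn--Banach separation argument is precisely the one given there. The only step you silently gloss is that the separation argument needs $\sup_{x^*\in K}\sum_i|x^*(x_i)|^p\geq\sup_{x^*\in B_{X^*}}\sum_i|x^*(x_i)|^p$ (the reverse inequality is the trivial one), which is where the norming hypothesis actually enters: since $K$ is norming, $B_{X^*}$ is the weak$^*$-closed absolutely convex hull of $K$, and $x^*\mapsto\bigl(\sum_i|x^*(x_i)|^p\bigr)^{1/p}$ is a weak$^*$-lower semicontinuous seminorm, so its supremum over $B_{X^*}$ is already attained on $K$.
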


We now derive a few extension properties of $p$-summing operators, where in some cases we also extend the range spaces. Note that in the above diagram, $i_p$ is $p$-summing and $\pi_p (i_p)=1$ (see \cite[p.40]{JD}). Recall that we can factor $j_p$ using canonical mappings:
\begin{tikzcd}
C(K) \arrow[r, "j_{\infty}"] & L_{\infty}(\mu) \arrow[r, "i_p"] & L_p(\mu)
\end{tikzcd}  

\begin{thm}\label{T2}
Let $Y$ be a $P_{\la}$-space.
\bla
\item Suppose that $T\in \Pi_p(X,Y)$. Then for any Banach space $Z\supseteq X$ there exists $\widetilde{T}\in \Pi_p(Z,Y)$ with $\pi_p(\widetilde{T})\leq \la \pi_p(T)$.
\item Suppose that $T\in \Pi_p(Y,X)$. Then for any Banach space $Z\supseteq Y$ there exists $\widetilde{T}\in \Pi_p(Z,X)$ with $\pi_p(\widetilde{T})\leq \la \pi_p(T)$.
\el
\end{thm}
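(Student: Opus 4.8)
The plan is to treat the two statements separately; part $(b)$ is immediate, while part $(a)$ uses the factorization of $p$-summing operators recorded in Theorem~\ref{T8}.

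For $(b)$, since $Y$ is a $P_\la$-space and $Y\ci Z$, Definition~\ref{D3} provides a projection $P:Z\ra Y$ with $\|P\|\leq\la$. Put $\widetilde{T}=T\circ P$. Then $\widetilde{T}|_Y=T$ because $P$ fixes $Y$ pointwise, and since $(\Pi_p,\pi_p)$ is an operator ideal, $\widetilde{T}\in\Pi_p(Z,X)$ with $\pi_p(\widetilde{T})\leq\pi_p(T)\,\|P\|\leq\la\,\pi_p(T)$. This is the whole argument for $(b)$.

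For $(a)$, the key observation is that an arbitrary norm-controlled extension of $T$ into $Y$ (one exists since $Y$ is $P_\la$) need not be $p$-summing, so instead I would extend only the bounded ``first leg'' of a factorization of $T$ and leave the genuinely $p$-summing piece untouched. Apply Theorem~\ref{T8}$(c)$ to $T\in\Pi_p(X,Y)$: there are a probability space $(\Omega,\Sigma,\mu)$, an operator $v:X\ra L_\iy(\mu)$ with $\|v\|=1$, and an operator $\hat{T}:L_p(\mu)\ra\ell_\iy(B_{Y^*})$ with $\|\hat{T}\|=\pi_p(T)$, such that $i_Y\circ T=\hat{T}\circ i_p\circ v$, where $i_p:L_\iy(\mu)\ra L_p(\mu)$ is the canonical inclusion, which is $p$-summing with $\pi_p(i_p)=1$, and $i_Y:Y\ra\ell_\iy(B_{Y^*})$ is the canonical isometric embedding. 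Since $L_\iy(\mu)$ is $1$-injective (it is isometric to $C(\Omega)$ for some Stonean $\Omega$), $v$ admits an extension $\widetilde{v}:Z\ra L_\iy(\mu)$ with $\|\widetilde{v}\|=\|v\|=1$. Identifying $Y$ with its image $i_Y(Y)\ci\ell_\iy(B_{Y^*})$ and invoking that $Y$ is a $P_\la$-space, there is a projection of $\ell_\iy(B_{Y^*})$ onto $i_Y(Y)$ of norm at most $\la$; composing it with $i_Y^{-1}$ yields $Q:\ell_\iy(B_{Y^*})\ra Y$ with $\|Q\|\leq\la$ and $Q\circ i_Y=I_Y$.

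Finally, set $\widetilde{T}=Q\circ\hat{T}\circ i_p\circ\widetilde{v}:Z\ra Y$. Restricting to $X$ gives $\widetilde{T}|_X=Q\circ\hat{T}\circ i_p\circ v=Q\circ i_Y\circ T=T$, so $\widetilde{T}$ extends $T$. Writing $\widetilde{T}=(Q\hat{T})\circ i_p\circ\widetilde{v}$ and using the ideal inequality for $\Pi_p$ together with $\pi_p(i_p)=1$, we obtain $\widetilde{T}\in\Pi_p(Z,Y)$ and $\pi_p(\widetilde{T})\leq\|Q\hat{T}\|\,\pi_p(i_p)\,\|\widetilde{v}\|\leq\|Q\|\,\|\hat{T}\|\leq\la\,\pi_p(T)$, as required. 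The one genuine obstacle is the observation just flagged: the extension must be carried out on the bounded factor $v$ through the injective space $L_\iy(\mu)$ rather than on $T$ itself, the factor $\la$ being incurred only at the final step of projecting $\ell_\iy(B_{Y^*})$ back onto $Y$; everything else is routine bookkeeping with the operator-ideal inequality.
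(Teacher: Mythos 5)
Your proof is correct and follows essentially the same route as the paper: the Pietsch factorization of Theorem~\ref{T8}, a norm-preserving extension of $v$ through the injective space $L_\iy(\mu)$, and a single application of the $P_\la$ property of $Y$ to absorb the factor $\la$. The only cosmetic difference is that you realize that last step as a norm-$\la$ projection of $\ell_\iy(B_{Y^*})$ onto $i_Y(Y)$, staying entirely within part $(c)$ of Theorem~\ref{T8}, whereas the paper extends $\hat{T}$ from $X_p$ to $L_p(\mu)$ using part $(b)$; the two devices are interchangeable.
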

\begin{proof}
$(a).$ Using the decomposition of $T$ as in Theorem~\ref{T8}(b) and because $Y$ is a $P_{\la}$-space, the operator $\hat{T}$ admits an extension $T': L_p(\mu)\ra Y$ with $\|T'\|\leq \la\|\hat{T}\|$. Now, consider the decomposition in Theorem~\ref{T8}(c) and take the norm-preserving extension $\widetilde{v}:Z\to L_{\iy}(\mu)$ of $v$. Then, $\widetilde{T}=T'\circ i_p\circ\widetilde{v}$ is the desired extension, and finally

$\pi_p(\widetilde{T})=\pi_p(T'\circ i_p\circ\widetilde{v})\leq \|T'\|\pi_p(i_p)\|\widetilde{v}\|\leq \la \|\hat{T}\|=\la\pi_p(T)$.

$(b).$ This is obvious.
\end{proof}

\begin{cor}\label{C3}
Let $Y$ be a $P_1$ space and $X$ be any Banach space. Then, for any $T\in\Pi_p(X,Y)$ and $Z\supseteq X$, there exists an extension $\widetilde{T}\in\Pi_p(Z,Y)$ with $\pi_p(T)=\pi_p(\widetilde{T})$, such that $Z\supseteq X$. 
\end{cor}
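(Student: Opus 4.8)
The plan is to obtain this as an immediate specialization of Theorem~\ref{T2}(a) to the case $\la=1$, supplemented by the elementary observation that an extension can only increase the $\pi_p$-norm.

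First I would invoke Theorem~\ref{T2}(a). Since $Y$ is a $P_1$-space, it is in particular a $P_\la$-space with $\la=1$, so for the given $T\in\Pi_p(X,Y)$ and any Banach space $Z\supseteq X$ there exists $\widetilde{T}\in\Pi_p(Z,X)$ — rather $\widetilde{T}\in\Pi_p(Z,Y)$ — extending $T$ and satisfying $\pi_p(\widetilde{T})\leq\pi_p(T)$. This already supplies the extension and one of the two inequalities.

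Next I would establish the reverse inequality $\pi_p(T)\leq\pi_p(\widetilde{T})$. Let $j\colon X\hookrightarrow Z$ denote the inclusion map, which has $\|j\|=1$. Because $\widetilde{T}$ extends $T$, we have $T=\widetilde{T}\circ j$. Since $(\Pi_p,\pi_p)$ is a Banach operator ideal (as recalled in Section~1), the ideal property gives
\[
\pi_p(T)=\pi_p(\widetilde{T}\circ j)\leq\pi_p(\widetilde{T})\,\|j\|=\pi_p(\widetilde{T}).
\]
Combining the two inequalities yields $\pi_p(T)=\pi_p(\widetilde{T})$, as claimed.

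There is essentially no hard step here: the entire substantive content lies in Theorem~\ref{T2}(a), and the only thing to add is the trivial lower bound coming from the ideal property of $\pi_p$ together with the fact that the inclusion $X\hookrightarrow Z$ is norm-one. The one point worth stating explicitly, for completeness, is that $\Pi_p$ is indeed a Banach operator ideal so that the inequality $\pi_p(R\circ S)\leq\pi_p(R)\|S\|$ is available.
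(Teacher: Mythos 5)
Your proposal is correct and is exactly the argument the paper intends (the paper states this as an immediate corollary of Theorem~\ref{T2} with $\la=1$, leaving the reverse inequality implicit). The only substantive addition you make --- that $\pi_p(T)=\pi_p(\widetilde{T}\circ j)\leq\pi_p(\widetilde{T})\|j\|$ by the ideal property of $(\Pi_p,\pi_p)$ --- is the right way to upgrade the inequality to an equality.
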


One may obtain a similar extension property for operators $T\in K_p^d(X,Y)$. However, in this case, we may not have a  $\ka_p^d$-norm preserving extension. 

\begin{thm}\label{T4}
Let $T\in K_p^d(X,Y)$. Then for $\e>0$ and $Z\supseteq X$ there exists an extension $\widetilde{T}\in K_p^d(Z,\ell_{\iy}(B_{Y^*}))$ such that $\ka_p^d(\widetilde{T})\leq \ka_p^d(T)+\e$.
\end{thm}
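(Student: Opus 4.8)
The plan is to realize the extension as a composition through $\ell_p$ whose two factors I can control separately. Write $W:=\ell_{\iy}(B_{Y^*})$ and let $i_Y:Y\to W$, $i_Y(y)=(y^*(y))_{y^*\in B_{Y^*}}$, be the canonical isometric embedding; since $W$ is a $P_1$-space, every bounded operator into $W$ defined on a subspace of a Banach space extends to the whole space without increasing its norm, and producing $\widetilde T$ amounts to finding $\widetilde T\in K_p^d(Z,W)$ with $\widetilde T|_X=i_YT$. Fix $\e>0$. Using the decomposition of operators in $K_p^d(X,Y)$ from \cite{SK1}, I would write $T=B\circ A$ where $A:X\to\ell_p$ has the coordinate form $Ax=(x_n^*(x))_n$ for a sequence $(x_n^*)\in\ell_p^s(X^*)$ — so that its adjoint $A^*:\ell_{p'}\to X^*$, $A^*(\al_n)=\sum_n\al_nx_n^*$, is the canonical $p$-compact operator with $\ka_p(A^*)\le\|(x_n^*)\|_p^s$ — and $B:\ell_p\to Y$ is $p$-compact, the decomposition being chosen (via the $\ka_p^d$-norm formula of \cite{SK1}) so that $\|(x_n^*)\|_p^s\,\|B\|<\ka_p^d(T)+\e$. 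Post-composing with $i_Y$ and using that an isometric image of a relatively $p$-compact set is relatively $p$-compact, $C:=i_Y\circ B:\ell_p\to W$ is $p$-compact with $\|C\|=\|B\|$.

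Next I would extend only the coordinate part. For each $n$ pick by Hahn--Banach an extension $\widetilde{x_n^*}\in Z^*$ of $x_n^*$ with $\|\widetilde{x_n^*}\|=\|x_n^*\|$, and set $\widetilde Az:=(\widetilde{x_n^*}(z))_n$. The estimate $\sum_n|\widetilde{x_n^*}(z)|^p\le\|z\|^p\sum_n\|x_n^*\|^p<\iy$ shows $\widetilde A$ is a bounded operator $Z\to\ell_p$ with $\widetilde A|_X=A$, and its adjoint $\widetilde A^*(\al_n)=\sum_n\al_n\widetilde{x_n^*}$ is again the canonical $p$-compact operator attached to $(\widetilde{x_n^*})\in\ell_p^s(Z^*)$; crucially $\|(\widetilde{x_n^*})\|_p^s=\|(x_n^*)\|_p^s$, so $\ka_p(\widetilde A^*)\le\|(x_n^*)\|_p^s$. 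I would then take $\widetilde T:=C\circ\widetilde A=i_Y\circ B\circ\widetilde A:Z\to W$. It extends $T$ because $\widetilde T|_X=i_Y\circ B\circ A=i_YT$; it is $p$-compact because $C$ is and $K_p$ is an operator ideal; and $\widetilde T^*=\widetilde A^*\circ C^*$ is a $p$-compact operator followed by a bounded one, so $\widetilde T\in K_p^d(Z,W)$ and
\[
\ka_p^d(\widetilde T)=\ka_p(\widetilde T^*)\le\ka_p(\widetilde A^*)\,\|C^*\|\le\|(x_n^*)\|_p^s\,\|B\|<\ka_p^d(T)+\e .
\]

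I expect the genuinely delicate point — the one that forces both the enlargement of the codomain and the loss of $\e$ — to be exactly the use of the \cite{SK1}-decomposition. A plain injective/Hahn--Banach extension of $T$ into $W$ preserves neither $p$-compactness of the operator nor of its adjoint, so one must separate the two roles: the factor $A$, whose adjoint is the canonical $p$-compact operator and which extends coordinate-functional-by-coordinate-functional \emph{without} enlarging the relevant $\ell_p^s$-norm, and the factor $B$, which is genuinely $p$-compact and is never extended, only post-composed with $i_Y$ into the injective space $W$. Establishing that $T$ really does admit such a factorization $T=B\circ A$ with $B:\ell_p\to Y$ ($p$-compact, defined on all of $\ell_p$) and with $\|(x_n^*)\|_p^s\,\|B\|$ arbitrarily close to $\ka_p^d(T)$ is the one ingredient that rests on the structure theory of \cite{SK1}; given it, the rest is the operator-ideal inequality $\ka_p(S\circ R)\le\ka_p(S)\|R\|$ together with two Hahn--Banach-type extensions. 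The $\e$ cannot in general be deleted, in accordance with the remark preceding the statement that a $\ka_p^d$-norm preserving extension need not exist.
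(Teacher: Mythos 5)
Your argument has a genuine gap at its load-bearing step: the claimed factorization $T=B\circ A$ with $A:X\to\ell_p$ the coordinate operator of some $(x_n^*)\in\ell_p^s(X^*)$ and $B:\ell_p\to Y$ \emph{$p$-compact and defined on all of} $\ell_p$, with $\|(x_n^*)\|_p^s\,\|B\|<\ka_p^d(T)+\e$. This is not what \cite[Theorem~3.1]{SK1} provides; that theorem (and it is the decomposition the paper itself uses here and in Theorem~\ref{T3}) factors $T=S\circ U$ with $U\in K(X,W)$ compact into an abstract Banach space $W$ and $S\in\Pi_p(W,Y)$ $p$-summing, with $\ka_p^d(T)=\inf\pi_p(S)\|U\|$. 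What you can actually extract from $T^*\in K_p(Y^*,X^*)$ is a sequence $(x_n^*)\in\ell_p^s(X^*)$ with $\|Tx\|\le\|(x_n^*(x))\|_{\ell_p}$, hence a contraction $B_0$ defined only on the closed subspace $M=\overline{A(X)}\subseteq\ell_p$ with $T=B_0A$. Three things then fail or are unestablished: (i) $M$ need not be complemented in $\ell_p$ and $Y$ is not assumed injective, so $B_0$ need not extend to a bounded $Y$-valued operator on all of $\ell_p$; (ii) even on $M$, $B_0$ need not be $p$-compact, since $B_M$ is in general much larger than $\overline{A(B_X)}$ and only $B_0(A(B_X))=T(B_X)$ is known to be relatively $p$-compact; (iii) if you instead extend $B_0$ into the injective space $\ell_\iy(B_{Y^*})$ by Hahn--Banach, the extension preserves the norm but there is no reason it preserves $p$-compactness --- that is precisely the difficulty this theorem is about, so asserting that $C=i_Y\circ B$ is $p$-compact on all of $\ell_p$ begs the question. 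Without a $p$-compact $C$ on a domain containing $\widetilde A(Z)$, you cannot conclude $\widetilde T\in K_p(Z,\ell_\iy(B_{Y^*}))$, and the paper's $K_p^d$ requires membership in $K_p$ in addition to $p$-compactness of the adjoint (which your $\widetilde A^*$ argument does deliver).

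By contrast, the paper's proof splits $T$ into a compact factor and a $p$-summing factor. The compact factor $U$ is extended via Lindenstrauss's \cite[Theorem~2.3]{JL}, which is where both the enlargement of an intermediate space and the loss of $\e$ enter ($\|\widetilde U\|\le\|U\|+\delta$ with $\delta=\e/\pi_p(S)$); the $p$-summing factor is extended norm-preservingly into $\ell_\iy(B_{Y^*})$ via the Pietsch factorization (Theorem~\ref{T2}/Corollary~\ref{C3}), exploiting that $\Pi_p$ behaves well under extension into injective spaces --- a property $K_p$ does not share. Your bookkeeping of the $\ell_p^s$-norms and the ideal inequality $\ka_p(SR)\le\ka_p(S)\|R\|$ are fine, but the proof cannot be completed until you either prove the full-$\ell_p$ $p$-compact factorization you assume (with the stated norm control) or switch to a factorization whose outer factor genuinely extends.
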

\begin{proof}
As $T\in K_p^d(X,Y)$, there exists
a Banach space $W$, a $U\in K(X,W)$, and $S\in \Pi_p(W,Y)$ such that $T=SU$ (see \cite[Theorem 3.1]{SK1}).

Let $\e>0$ and $Z\supseteq X$. Choose $\delta=\frac{\e}{\pi_p(S)}$ for some $S$, where $T=SU$ as above.

For this $\de$, there exists $V\supseteq W$ such that $U$ has a compact extension $\widetilde{U}:Z\ra V$ with $\|\widetilde{U}\|\leq \|U\|+\delta$ (see \cite[Theorem 2.3]{JL}). 
By Corollary~\ref{C3}, there exists an extension $\widetilde{S}\in \Pi_p\big(V,\ell_{\iy}(B_{Y^*})\big)$ such that $\pi_p(\widetilde{S})=\pi_p(S)$. Define $\widetilde{T}=\widetilde{S}\circ\widetilde{U}$. Then, 
\beqa
\ka_p^d(\widetilde{T})\leq\inf \Big\{\pi_p(\widetilde{S})\|\widetilde{U}\|:\widetilde{T}=\widetilde{S}\widetilde{U}\Big\}
&\leq & \inf\Big\{\pi_p(S)(\|U\|+\delta):T=SU\Big\}\\
&\leq & \ka_p^d(T)+\delta\inf\big\{\pi_p(S):T=SU\big\}\\
&\leq & \ka_p^d(T)+\e.
\eeqa
Thus, $\widetilde{T}$ is the desired extension and this completes the proof. 
\end{proof}

Now we establish a sufficient condition on $X$ such that any $T\in K_p^d(X,Y)$ (or $K_p^d(Y,X)$) has an extension $\widetilde{T}\in K_p^d(Z,Y)$ (or $K_p^d(Z,X)$) where $Z$ is a Banach space that contains $X$ (or $Y$). 

\begin{thm}\label{T3}
Let $X, Y, Z$ be Banach spaces with $Z\supseteq X$, and $X^{**}$ a $P_\la$ space, for some $\la\geq1$. If $1\leq p\leq\iy$ and $T\in K_p^d(X,Y)$ then there exists $\widetilde{T}\in K_p^d(Z,Y)$ such that $\ka_p^d(\widetilde{T})\leq \la\ka_p^d(T)$.
\end{thm}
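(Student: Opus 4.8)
The plan is to extend $T$ \emph{through the bidual} $X^{**}$. Since $X^{**}$ is a $P_\la$ space it is $\la$-injective (see \cite[p.~94]{MD}), so the canonical embedding $\iota_X\colon X\ra X^{**}$, viewed as an operator on the subspace $X\ci Z$, extends to $\Phi\colon Z\ra X^{**}$ with $\|\Phi\|\le\la$ and $\Phi|_X=\iota_X$. Next, since $T\in K_p^d(X,Y)$ it is in particular $p$-compact, hence compact (an $\iy$-compact set is just a relatively compact one), hence weakly compact; consequently $T^{**}(X^{**})\ci\iota_Y(Y)$, and $T^{**}$ being compact there is a compact operator $\bar T\colon X^{**}\ra Y$ with $\iota_Y\bar T=T^{**}$. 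From $T^{**}\iota_X=\iota_Y T$ we get $\bar T\,\iota_X=T$, so the operator $\widetilde T:=\bar T\circ\Phi\colon Z\ra Y$ restricts to $T$ on $X$.

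The crux is that $\bar T\in K_p^d(X^{**},Y)$ with $\ka_p^d(\bar T)=\ka_p^d(T)$. For the adjoint, dualizing $\iota_Y\bar T=T^{**}$ and using $\iota_Y^*\iota_{Y^*}=\mathrm{id}$ together with the identity $(T^*)^{**}\iota_{Y^*}=\iota_{X^*}T^*$ gives $\bar T^*=\iota_{X^*}\circ T^*\colon Y^*\ra X^*\ra X^{***}$; as $T^*\in K_p(Y^*,X^*)$ and $\iota_{X^*}$ is an isometry admitting the norm-one left inverse $\iota_X^*$, we get $\bar T^*\in K_p(Y^*,X^{***})$ with $\ka_p(\bar T^*)=\ka_p(T^*)=\ka_p^d(T)$. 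For $\bar T$ itself, Goldstine's theorem and the $w^*$--$w^*$ continuity of $T^{**}$ give $T^{**}(B_{X^{**}})\ci\overline{\iota_Y T(B_X)}^{\,w^*}$, and $\iota_Y(\overline{T(B_X)})$ is $w^*$-compact, hence $w^*$-closed, because $T$ is compact; therefore $\bar T(B_{X^{**}})\ci\overline{T(B_X)}$. Since $T(B_X)$ is relatively $p$-compact so is its norm closure — for $1\le p\le\iy$ the set $E_y(B_{\ell_{p'}})$ is norm closed, as $B_{\ell_{p'}}$ is $w^*$-compact and $w^*$-metrizable while $E_y$ is compact — whence $\bar T(B_{X^{**}})$ is relatively $p$-compact and $\bar T\in K_p(X^{**},Y)$. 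Thus $\bar T\in K_p^d(X^{**},Y)$ with $\ka_p^d(\bar T)=\ka_p(\bar T^*)=\ka_p^d(T)$. Now, since $K_p^d$ is an operator ideal, $\widetilde T=\bar T\circ\Phi\in K_p^d(Z,Y)$, with $\widetilde T|_X=T$ and $\ka_p^d(\widetilde T)\le\|\Phi\|\,\ka_p^d(\bar T)\le\la\,\ka_p^d(T)$, as required.

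The step I expect to be the main obstacle is establishing that the bidual lift $\bar T$ stays in $K_p^d$ with no loss in the $\ka_p^d$-norm: the $\bar T^*$-side is a formal diagram chase, but the $p$-compactness of $\bar T$ rests on the (standard, but worth isolating) fact that the norm closure of a relatively $p$-compact set is again relatively $p$-compact, equivalently that the sets $E_y(B_{\ell_{p'}})$ are norm closed. If one prefers to bypass this, the argument can instead be run through the factorization of \cite[Theorem~3.1]{SK1}: write $T=SU$ with $U\in K(X,W)$, $S\in\Pi_p(W,Y)$ realizing $\ka_p^d(T)=\pi_p(S)\|U\|$; extend $U$ to $\widetilde U:=U^{**}\circ\Phi\colon Z\ra W$, which is compact with $\|\widetilde U\|\le\la\|U\|$ (using that $U^{**}$ has range in $\iota_W(W)$ since $U$ is compact) and restricts to $U$ on $X$; then $\widetilde T:=S\circ\widetilde U$ extends $T$, and the converse part of \cite[Theorem~3.1]{SK1} gives $\widetilde T\in K_p^d(Z,Y)$ with $\ka_p^d(\widetilde T)\le\pi_p(S)\|\widetilde U\|\le\la\,\ka_p^d(T)$. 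For $p=\iy$ the statement reduces to the known fact that a compact operator admits a norm-$\la$ compact extension when $X^{**}$ is a $P_\la$ space.
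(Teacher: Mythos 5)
Your proposal is correct, and your primary argument takes a genuinely different route from the paper's. The paper proves this by invoking the Sinha--Karn factorization \cite[Theorem~3.1]{SK1}: it writes $T=U\circ V$ with $V\in K(X,W)$ compact and $U\in\Pi_p(W,Y)$ $p$-summing, extends the compact factor $V$ to $\widetilde{V}\in K(Z,W)$ with $\|\widetilde{V}\|\leq\la\|V\|$ via Lindenstrauss's Theorem~2.1 ($(1)\Ra(6)$ of \cite[p.11]{JL}), sets $\widetilde{T}=U\circ\widetilde{V}$, and reads off the norm bound from the formula $\ka_p^d(\cdot)=\inf\{\pi_p(U)\|V\|\}$. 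Your second, ``bypass'' sketch is essentially this same proof with the Lindenstrauss step made explicit as $U^{**}\circ\Phi$ (note only that you should take a factorization with $\pi_p(S)\|U\|\leq\ka_p^d(T)+\e$ rather than assuming the infimum is attained --- a point the paper's own estimate also glosses over). Your main argument, by contrast, avoids the factorization entirely: the key lemma is that the bidual lift $\bar{T}\colon X^{**}\ra Y$ of a compact $T$ lies in $K_p^d(X^{**},Y)$ with $\ka_p^d(\bar{T})=\ka_p^d(T)$, after which the ideal property of $(K_p^d,\ka_p^d)$ composed with $\Phi$ gives the exact bound $\la\,\ka_p^d(T)$ with no $\e$-loss. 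What this buys is a self-contained construction resting only on the $\la$-injectivity of $X^{**}$ \cite[p.94]{MD} and on the (correct, and correctly isolated) observation that $E_y(B_{\ell_{p'}})$ is norm closed, so that relative $p$-compactness passes to norm closures; your adjoint computation $\bar{T}^*=\iota_{X^*}\circ T^*$ and the two-sided norm identity via the left inverse $\iota_X^*$ are also sound. The cost is carrying out that closure argument, which the paper's route outsources to \cite{SK1} and \cite{JL}.
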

\begin{proof}
From \cite[Theorem~3.1]{SK1} there exist a Banach space $W$, compact operator $V\in K(X,W)$, and linear operator $U\in \Pi_p(W,Y)$ such that $T=U\circ V$. 

By \cite[p.11]{JL}$(1)\Ra (6)$, there exists $\widetilde{V}\in K(Z,W)$ such that $\|\widetilde{V}\|\leq\la\|V\|$. Define $\widetilde{T}=U\circ \widetilde{V}$. From \cite[Theorem~3.1]{SK1}, we obtain $\widetilde{T}\in K_p^d(Z,Y)$. Now we estimate $\ka_p^d(\widetilde{T})$: 
$$
\begin{aligned}
\ka_p^d(\widetilde{T}) &\leq \inf\{\pi_p(U).\|\widetilde{V}\|:\widetilde{T}=U\widetilde{V}\text{ as above}\} ~(\mbox{see \cite[Theorem 3.1]{SK1}})\\
&\leq \inf\{\pi_p(U).\la\|V\|:T=UV\}\\
&=\la \ka_p^d(T).
\end{aligned}$$
Therefore, we obtain $\ka_p^d(\widetilde{T})\leq\la\ka_p^d(T)$.
\end{proof}
\begin{thm}
Let $X$ be a $P_{\la}$-space and and $T\in K_p^d(Y,X)$. Then for any $Z\supseteq Y$, there exists $\widetilde{T}\in K_p^d(Z,X)$ with $\ka_p^d(\widetilde{T})\leq (\la+\e)\ka_p^d(T)$.    
\end{thm}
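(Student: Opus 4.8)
The plan is to imitate the proofs of Theorems~\ref{T3} and \ref{T4}: factor $T$ as a compact operator followed by a $p$-summing operator, extend each factor separately (the compact one to a slightly larger range space, the $p$-summing one using the $P_\la$ property of $X$), and recombine. Throughout I take $1\leq p\leq\iy$ and $\e>0$; the case $T=0$ is trivial, so I also assume $\ka_p^d(T)>0$.

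First I would invoke \cite[Theorem~3.1]{SK1}: since $T\in K_p^d(Y,X)$, there are a Banach space $W$, a compact operator $V\in K(Y,W)$ and an operator $U\in\Pi_p(W,X)$ with $T=U\circ V$, and moreover $\ka_p^d(T)=\inf\{\pi_p(U)\,\|V\|\}$, the infimum being taken over all such factorizations. I fix one factorization $T=U\circ V$ with $\pi_p(U)\,\|V\|<\ka_p^d(T)+\eta$, where $\eta:=\e\,\ka_p^d(T)/(2\la)$; note that $\pi_p(U)>0$. Then I set $\de:=\e\,\ka_p^d(T)/(2\la\,\pi_p(U))$.

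Next, exactly as in the proof of Theorem~\ref{T4}, I use \cite[Theorem~2.3]{JL} to enlarge the range of the compact factor: there are a Banach space $\widetilde{W}\supseteq W$ and a compact extension $\widetilde{V}:Z\to\widetilde{W}$ of $V$ with $\|\widetilde{V}\|\leq\|V\|+\de$. Since $X$ is a $P_\la$-space and $U\in\Pi_p(W,X)$, Theorem~\ref{T2}(a), applied to the isometric inclusion $W\subseteq\widetilde{W}$, yields an extension $\widetilde{U}\in\Pi_p(\widetilde{W},X)$ of $U$ with $\pi_p(\widetilde{U})\leq\la\,\pi_p(U)$. Put $\widetilde{T}:=\widetilde{U}\circ\widetilde{V}:Z\to X$. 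Since $\widetilde{V}$ agrees with $V$ on $Y$ and $V$ takes values in $W$, on which $\widetilde{U}$ agrees with $U$, we get $\widetilde{T}|_Y=U\circ V=T$, so $\widetilde{T}$ is indeed an extension of $T$; and because it factors as a compact operator followed by a $p$-summing operator, \cite[Theorem~3.1]{SK1} gives $\widetilde{T}\in K_p^d(Z,X)$. Finally,
\begin{align*}
\ka_p^d(\widetilde{T})&\leq\pi_p(\widetilde{U})\,\|\widetilde{V}\|\leq\la\,\pi_p(U)\bigl(\|V\|+\de\bigr)\\
&<\la\bigl(\ka_p^d(T)+\eta\bigr)+\la\,\pi_p(U)\,\de=(\la+\e)\,\ka_p^d(T),
\end{align*}
by the choices of $\eta$ and $\de$.

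The argument is routine once these three ingredients are assembled, and I expect no genuine obstacle beyond careful norm bookkeeping. The one point to watch is that $\widetilde{T}$ really restricts to $T$ after the codomain of the compact factor has been enlarged; this works precisely because the extension $\widetilde{U}$ of $U$ furnished by Theorem~\ref{T2}(a) is defined on all of $\widetilde{W}$ while still carrying $W$ into $X$ via $U$ (cf.\ the proof of Theorem~\ref{T2}(a)). One should also note, as in Theorems~\ref{T3} and \ref{T4}, that \cite[Theorem~3.1]{SK1}, Theorem~\ref{T2}(a) and \cite[Theorem~2.3]{JL} are available in the full range $1\leq p\leq\iy$.
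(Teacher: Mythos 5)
Your proposal is correct and follows essentially the same route as the paper's own proof: factor $T=U\circ V$ via \cite[Theorem~3.1]{SK1}, extend the compact factor $V$ with a slightly enlarged codomain using \cite[Theorem~2.3]{JL}, extend the $p$-summing factor $U$ via Theorem~\ref{T2} using that $X$ is a $P_\la$-space, and recombine. If anything, your bookkeeping is slightly cleaner than the paper's, since you fix a near-optimal factorization up front rather than manipulating infima over factorizations after the fact.
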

\begin{proof}
Since $T\in K_p^d(Y,X)$, there exist
a Banach space $W$, $V\in K(Y,W)$ and $U\in \Pi_p(W,X)$ such that $T=UV$ (see \cite[Theorem 3.1]{SK1}).

Now let $0<\e'<\frac{\e\ka_p^d(T)}{\la\pi_p(U)}$ for some $U$ such that $T=UV$ as above. If $Z\supseteq Y$ then there exists $E\supseteq W$ such that $V$ has a compact extension $\widetilde{V}:Z\ra E$ with $\|\widetilde{V}\|\leq \|V\|+\e'$ (see \cite[Theorem 2.3]{JL}).

Now by Theorem~\ref{T2}, $U$ has an extension $\widetilde{U}\in\Pi_p(E,X)$ with $\pi_p(\widetilde{U})\leq\la\pi_p(U)$. The desired extension is $\widetilde{T}=\widetilde{U}\widetilde{V}$. Using \cite[Theorem 3.1]{SK1}, we estimate $\ka_p^d(\widetilde{T})$.
$$
\begin{aligned}
\ka_p^d(\widetilde{T}) &\leq \inf\{\pi_p(\widetilde{U}).\|\widetilde{V}\|:\widetilde{T}=\widetilde{U}\widetilde{V}\text{ as above}\}\\
&\leq \inf\{\la.\pi_p(U).(\|V\|+\e'):T=UV\}\\
&\leq \la\ka_p^d(T)+\la\e'\inf\big\{\pi_p(U):T=UV\big\}\\
&<(\la+\e)\ka_p^d(T).
\end{aligned}$$
This completes the proof.
\end{proof}

\begin{thm}\label{T6}
Let $X$ be a real Banach space, $p>1$ and $\e>0$. Suppose that for all $q\geq p$ and every operator $T:Y\ra X$ with $\dim T(X) \leq 3$ has an extension $\widetilde{T}:Z\ra X$ exists, $Z\supseteq Y$ with $\dim Z/Y=1$ and $\ka_q^d(\widetilde{T})\leq (1+\e)\ka_q^d(T)$. Then $X$ is an $L_1$-predual.
\end{thm}

\begin{proof}
From the assumption on $T$, it follows from Theorem~\ref{T1} that $\|\widetilde{T}^*\|\leq (1+\e)\|T^*\|$. The result now follows from \cite[Theorem~5.4]{JL}.
\end{proof}

\begin{prop}\label{P5}
For Banach spaces $X, Y$ \tFAE.
    \bla
\item For every $T\in K_p^d(Y,X)$, $Z\supseteq Y$ and $\e>0$, there exists  $\widetilde{T}\in K_p^d(Z,X)$ with $\ka_p^d(\widetilde{T})\leq (\la+\e)\,\ka_p^d(T)$ and $\ka_p^d(\widetilde{T}|_{Y}-T)\leq\e$.
\item For every $T\in K_p^d(Y,X),Z\supseteq Y$ and $\e>0$, there exists extension $\widetilde{T}\in K_p^d(Z,X)$ with $\ka_p^d(\widetilde{T})\leq(\la+\e)\,\ka_p^d(T)$.
    \el
\end{prop}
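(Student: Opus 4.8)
The plan is to follow, essentially verbatim, the iteration used in the proof of Proposition~\ref{P1}, carried out now inside the Banach operator ideal $(K_p^d,\ka_p^d)$ rather than $(K_p,\ka_p)$. Only the implication $(a)\Ra(b)$ needs an argument, since $(b)\Ra(a)$ is immediate: any extension $\widetilde{T}$ of $T$ satisfies $\widetilde{T}|_Y=T$, so $\ka_p^d(\widetilde{T}|_Y-T)=0\leq\e$.

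First I would isolate the two structural facts that drive the proof. (i) $K_p^d(Z,X)$ is complete for $\ka_p^d$ --- it is a Banach operator ideal, as recalled in Section~2 --- so every $\ka_p^d$-absolutely convergent series of operators in $K_p^d(Z,X)$ has its sum in $K_p^d(Z,X)$. (ii) If $\iota:Y\hookrightarrow Z$ denotes the inclusion, then $S|_Y=S\circ\iota$ and $(S|_Y)^*=\iota^*\circ S^*$, whence $\ka_p^d(S|_Y)=\ka_p(\iota^*S^*)\leq\|\iota^*\|\,\ka_p(S^*)=\ka_p^d(S)$; thus restriction $K_p^d(Z,X)\to K_p^d(Y,X)$ is a $\ka_p^d$-contraction. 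Since $K_p^d(Y,X)$ is also a linear space, all the \emph{errors} arising in the iteration lie again in $K_p^d(Y,X)$.

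Then I would run the iteration. Given $Z\supseteq Y$, $\e>0$ and $T\in K_p^d(Y,X)$: apply $(a)$ to $T$ to get $\widetilde{T}_1\in K_p^d(Z,X)$ with $\ka_p^d(\widetilde{T}_1)\leq(\la+\e)\ka_p^d(T)$ and $\ka_p^d(\widetilde{T}_1|_Y-T)<\e/2$; then apply $(a)$ to $T-\widetilde{T}_1|_Y\in K_p^d(Y,X)$ (now with the constant $\la+1$) to get $\widetilde{T}_2$ with $\ka_p^d(\widetilde{T}_2)\leq(\la+1)\ka_p^d(T-\widetilde{T}_1|_Y)$ and $\ka_p^d(\widetilde{T}_2|_Y-(T-\widetilde{T}_1|_Y))<\e/2^2$; and continue inductively. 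As in Proposition~\ref{P1}, this produces $(\widetilde{T}_n)\ci K_p^d(Z,X)$ with
\[
\ka_p^d(\widetilde{T}_n)\leq(\la+1)\ka_p^d\Big(T-(\widetilde{T}_1+\cdots+\widetilde{T}_{n-1})|_Y\Big)\ \ (n\geq2),\qquad \ka_p^d\Big(T-(\widetilde{T}_1+\cdots+\widetilde{T}_n)|_Y\Big)<\frac{\e}{2^n},
\]
so that $\ka_p^d(\widetilde{T}_n)\leq(\la+1)\e/2^{n-1}$ for $n\geq2$. Hence $\sum_n\widetilde{T}_n$ is $\ka_p^d$-absolutely convergent; by (i) its sum $\widetilde{T}$ lies in $K_p^d(Z,X)$, and by (ii) (continuity of restriction) together with the second displayed bound, $\widetilde{T}|_Y=\lim_m(\widetilde{T}_1+\cdots+\widetilde{T}_m)|_Y=T$, so $\widetilde{T}$ is a genuine extension. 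Finally $\ka_p^d(\widetilde{T})\leq\ka_p^d(\widetilde{T}_1)+\sum_{n\geq2}(\la+1)\e/2^{n-1}\leq(\la+\e)\ka_p^d(T)+(\la+1)\e$, and since $\e>0$ is arbitrary, $(b)$ follows.

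I do not anticipate a genuine obstacle: the proof is a transcription of that of Proposition~\ref{P1}. The only points needing (routine) verification are the completeness of $(K_p^d,\ka_p^d)$ and the contraction bound $\ka_p^d(S|_Y)\leq\ka_p^d(S)$ recorded above; some mild care is needed to keep the constants $\la+\e$ (first step) and $\la+1$ (subsequent steps) bookkept exactly as in Proposition~\ref{P1} so that the final estimate takes the stated form.
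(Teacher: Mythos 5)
Your proposal is correct and follows essentially the same route as the paper: the paper's proof of Proposition~\ref{P5} is exactly the iteration from Proposition~\ref{P1} transplanted to the ideal $(K_p^d,\ka_p^d)$, with the same sequence $(\widetilde{T}_n)$, the same bounds $\ka_p^d(\widetilde{T}_n)\leq(\la+1)\e/2^{n-1}$ for $n\geq 2$, and the same final estimate. Your explicit verification of the two background facts (completeness of the ideal norm and the contractivity of restriction under $\ka_p^d$) is a welcome addition that the paper leaves implicit.
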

\begin{proof}
It remains to prove $(a)\Ra (b)$. We follow similar techniques used in the proof of Proposition~\ref{P1}.

Using similar arguments stated in Proposition~\ref{P1}, we get a sequence $(\widetilde{T}_n)\ci K_p^d(Z,X)$  satisfying 
\begin{equation}\label{eq10}
   \ka_p^d(\widetilde{T}_1)\leq\,(\la+\e)\,\ka_p^d(T),\quad \ka_p^d(\widetilde{T}_1|_Y-T)<\frac{\e}{2}
\end{equation}
for $n=1$, and for $n\geq 2$, we have
\[
    \ka_p^d(\widetilde{T}_n)\leq\,(\la+1)\,\ka_p^d\Big(T-(\widetilde{T}_1+\widetilde{T}_2+...+\widetilde{T}_{n-1})|_Y\Big),
\]
\begin{equation}\label{eq11}
    \ka_p^d\Big(\widetilde{T}_n|_Y-\big(T-(\widetilde{T}_1+\widetilde{T}_2+...+\widetilde{T}_{n-1})|_Y\big)\Big)<\frac{\e}{2^n}.
\end{equation}

For $n\geq 2$, we have $\ka_p^d(\widetilde{T}_n)\leq (\la+1)\e/2^{n-1}$. Hence, the series $\sum\limits_{n=1}^\iy \widetilde{T}_n$ converges in the $\ka_p^d$ norm topology to an operator $\widetilde{T}\in K_p^d(Z,X)$ satisfying $\widetilde{T}|_Y=T$. In fact, from equation~\ref{eq11} for every $\de>0$, there exists $m$ such that $\ka_p^d((T-(\widetilde{T_1}+\widetilde{T_2}+\ldots+\widetilde{T_m})|_Y))<\de$.
   \[   
\mbox{Moreover,~}\ka_p^d(\widetilde{T})\leq \ka_p^d(\widetilde{T}_1)+\sum_{n=2}^\iy (\la+1)\e/2^{n-1}\leq (\la+\e)\ka_p^d(T)+(\la+1)\e.
\]
Since $\e>0$ is arbitrary, $(b)$ follows. 
\end{proof}

\begin{thm}\label{T10}
\bla
\item Let $X$ be a Banach space such that every $T\in K_p^d(Y,X)$ has an extension $\widetilde{T}\in K_p^d(Z,X)$, where $(Z\supseteq Y)$. Then there exists a constant $\eta$ such that for every such $Y,Z$, and $T$ there exists a $p$-compact extension $\widetilde{T}$ with $\ka_p^d(\widetilde{T})\leq \eta\ka_p^d(T)$.
\item Let $X$ be a Banach space such that every $T\in K_p^d(X,Y)$ has an extension $\widetilde{T}\in K_p^d(Z,Y)$, where $(Z\supseteq X)$. Then there exists a constant $\eta$ such that for every such $Y,Z$, and $T$ there exists a $p$-compact extension $\widetilde{T}$ with $\ka_p^d(\widetilde{T})\leq \eta\ka_p^d(T)$.
\el
\end{thm}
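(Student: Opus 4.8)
The plan is to follow the proof of Theorem~\ref{T9} verbatim in structure, replacing the ideal $(K_p,\ka_p)$ by its dual ideal $(K_p^d,\ka_p^d)$ and using that $(K_p^d,\ka_p^d)$ is again a Banach operator ideal, so that $\ka_p^d$ is complete and satisfies the ideal inequality $\ka_p^d(BSA)\leq\|B\|\,\ka_p^d(S)\,\|A\|$.

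For $(a)$ I would argue by contradiction. If no uniform $\eta$ exists, then for each $n$ there are $Z_n\supseteq Y_n$ and $T_n\in K_p^d(Y_n,X)$ with $\ka_p^d(T_n)=1$ such that every extension in $K_p^d(Z_n,X)$ has $\ka_p^d$-norm $\geq n^3$. I then set $Y=\oplus_{c_0}Y_n$, $Z=\oplus_{c_0}Z_n$ (the coordinatewise isometric inclusions $Y_n\ci Z_n$ assemble to an isometric embedding $Y\ci Z$), and $T=\sum_n T_n'/n^2$, where $T_n'=T_n\circ\rho_n$ and $\rho_n\colon Y\to Y_n$ is the $n$-th coordinate projection. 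Since $\|\rho_n\|=1$ and $T_n'\circ\iota_n=T_n$ for the coordinate inclusion $\iota_n\colon Y_n\to Y$, the ideal inequality gives $\ka_p^d(T_n')=\ka_p^d(T_n)=1$, hence $\ka_p^d(T)\leq\sum_n n^{-2}<\iy$ and $T\in K_p^d(Y,X)$ by completeness. An extension $\widetilde T\in K_p^d(Z,X)$ exists by hypothesis; restricting $n^2\widetilde T$ to the $n$-th coordinate copy of $Z_n$ produces (by the same coordinate bookkeeping as in Theorem~\ref{T9}$(a)$) an extension of $T_n$ lying in $K_p^d(Z_n,X)$, so $\ka_p^d(n^2\widetilde T|_{Z_n})\geq n^3$. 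As $\ka_p^d(\widetilde T|_{Z_n})\leq\ka_p^d(\widetilde T)$ by the ideal inequality (norm-one inclusion), this forces $\ka_p^d(\widetilde T)\geq n$ for every $n$, which is absurd.

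For $(b)$ I would first localize the hypothesis to a single fixed injective superspace. Fix an isometric embedding $X\ci W_0$ with $W_0$ a $P_1$-space (every Banach space embeds isometrically into some $\ell_\iy(\Gamma)$, which is a $P_1$-space). The hypothesis already yields \emph{some} extension in $K_p^d(W_0,Y)$ for each $T\in K_p^d(X,Y)$, since $W_0\supseteq X$; the work is to get a uniform bound. So I claim there is $\eta_0$ such that every $T\in K_p^d(X,Y)$ admits an extension $\bar T\in K_p^d(W_0,Y)$ with $\ka_p^d(\bar T)\leq\eta_0\,\ka_p^d(T)$, and I prove this by the same contradiction scheme as in $(a)$: if it fails, choose $Y_n$ and $T_n\in K_p^d(X,Y_n)$ with $\ka_p^d(T_n)=1$ for which every extension to $W_0$ has $\ka_p^d$-norm $\geq n^3$, put $Y=\oplus_{c_0}Y_n$ and $T=\sum_n T_n/n^2\in K_p^d(X,Y)$ (viewing $Y_n\ci Y$ isometrically, which does not raise $\ka_p^d$), take an extension $\widetilde T\in K_p^d(W_0,Y)$, and observe that $n^2\pi_n\widetilde T\colon W_0\to Y_n$ — with $\pi_n\colon Y\to Y_n$ the coordinate projection — is an extension of $T_n$ in $K_p^d(W_0,Y_n)$, whence $\ka_p^d(n^2\pi_n\widetilde T)\geq n^3$ and therefore $\ka_p^d(\widetilde T)\geq\ka_p^d(\pi_n\widetilde T)\geq n$ for all $n$, a contradiction. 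Finally, for an arbitrary $Z\supseteq X$ and $T\in K_p^d(X,Y)$, I take $\bar T\in K_p^d(W_0,Y)$ from the claim, use injectivity of $W_0$ to extend the inclusion $X\hookrightarrow W_0$ to $\widetilde I\colon Z\to W_0$ with $\|\widetilde I\|=1$, and set $\widetilde T=\bar T\circ\widetilde I$; then $\widetilde T\in K_p^d(Z,Y)$ extends $T$ and $\ka_p^d(\widetilde T)\leq\ka_p^d(\bar T)\,\|\widetilde I\|\leq\eta_0\,\ka_p^d(T)$. Taking $\eta=\eta_0$ completes $(b)$.

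The verifications that every operator produced along the way (coordinate restrictions, compositions with $\rho_n$, $\pi_n$, $\widetilde I$, and scalar multiples) stays inside $K_p^d$ and obeys the ideal inequality are routine once one invokes that $K_p^d$ is a Banach operator ideal; likewise the fact that the isometric enlargement $Y_n\hookrightarrow\oplus_{c_0}Y_n$ leaves $\ka_p^d$ unchanged (its adjoint is a norm-one quotient map onto $Y_n^*$). The step I expect to require the most care is the organization of $(b)$: because the hypothesis guarantees extensions to \emph{every} superspace but with no a priori control on $\ka_p^d$, one cannot simply diagonalize over varying $P_1$-superspaces; the right move is to pin down one fixed injective $W_0$, extract the uniform constant there by the $\oplus_{c_0}$-diagonalization, and only afterwards transfer the conclusion to a general $Z$ via injectivity of $W_0$.
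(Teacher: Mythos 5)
Your proposal is correct and follows the same two-step architecture as the paper's proof: the $\oplus_{c_0}$ gliding-hump contradiction to extract the uniform constant, followed in $(b)$ by composing with a norm-one extension $\widetilde{I}:Z\to W$ of the identity into an injective superspace. The only (harmless) differences are that you derive $\ka_p^d(T_n')=\ka_p^d(T_n)$ from the two-sided ideal inequality where the paper computes $(T_n')^*$ explicitly and compares the sets $T_n^*(B_{X^*})$ and $(T_n')^*(B_{X^*})$, and that in $(b)$ you fix a single injective $W_0$ up front, whereas the paper lets the $P_1$-spaces $W_n$ vary with $n$ and assembles $W=\oplus_{\ell_\iy}W_n$.
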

\begin{proof}
$(a).$ The proof proceeds in the same manner as that of Theorem~\ref{T9}. Thus, it suffices to prove that $\ka_p^d(T_n')=\ka_p^d(T_n)$, where $T,T_n',T_n,Y$ and $Y_n$ are as in theorem~\ref{T9}. 
      
Therefore, we need to prove that $\ka_p((T_n')^*)=\ka_p(T_n^*)$. First we observe that $Y_n^*\supseteq T_n^*(X^*)\cong\big((T_n')^*\big)(X^*)\subseteq(0,...,Y_n^*,0,...)\cong Y_n^*$. In particular for $x^*\in X^*$ and $y=(y_1,...,y_n,...)\in Y$, $T_n'^*(x^*)(y)=x^*(T_n'(y))=x^*(T_ny_n)=(T_n^*)(x^*)(y_n)=\Big((0,...,(T_n^*)(x^*),0,...\Big)(y)$. 

In this way, we observe that both the sets $T_n^*(B_{X^*})$ and $T_n'^*(B_{X^*})$ are the same. It follows that $\ka_p(T_n'^*)=\ka_p(T_n^*)$. 

$(b).$ We first claim the following.

{\sc Claim:~} There exists a $P_1$-space $W$ such that for any Banach space $Y$ and
$T \in K_p^d(X,Y)$ there exists an extension $\widetilde{T} \in K_p^d(W,Y)$ with
$\ka_p^d(\widetilde{T}) \leq \eta \ka_p^d(T)$.

Suppose no such $\eta$ exists. Then for every $n$ there exist a $P_1$ space
$W_n \supseteq X$, a Banach space $Y_n$ and $T_n \in K_p^d(X,Y)$ with $\ka_p^d(T_n) = 1$ such that any
extension $\widetilde{T}_n\in K_p^d(W_n,Y_n)$ of $T_n$ satisfies
$\ka_p^d(\widetilde{T}_n) \geq n^3$. Let $Y=\oplus_{c_0}Y_n$ and consider $T_n:X\to Y$ as each $Y_n$ is a subspace of $Y$. Now define $T : X \to Y \text{ by } T = \sum_{n=1}^\iy \frac{T_n}{n^2}$. Clearly, $T \in K_p^d(X,Y)$ since each $T_n \in K_p^d(X,Y)$. Now observe that
$\bigoplus_{\ell_\iy} W_n = W$ is a $P_1$ space as each $W_n$ is a $P_1$ space and
$W$ contains $X$. By our hypothesis there exists an extension
$\widetilde{T} \in K_p^d(W,Y)$ of $T$. Then the restriction of $n^2\widetilde{T}$ to
$W_n$ is an extension of $T_n$. From our assumption
$\ka_p^d(n^2\widetilde{T}) \geq n^3$, which leads to
$\ka_p^d(\widetilde{T}) \geq n$, for all $n$, which is a contradiction.

Next, assume that $Z$ is a Banach space and $Z \supseteq X$ and let
$T \in K_p^d(X,Y)$. From the above claim, there exists an extension
$\widetilde{T} \in K_p^d(Z,Y)$. Moreover, the identity
$I : X \to X$ has an extension $\widetilde{I} : Z \to W$ with $\|\widetilde{I}\| = 1$,
which follows from the property of $P_1$-space. Clearly
$\widetilde{T} \circ \widetilde{I}: Z \ra Y$ is an extension of $T$. Moreover,
$\widetilde{T} \circ \widetilde{I} \in K_p^d(Z,Y)$ and $
\ka_p^d(\widetilde{T} \circ \widetilde{I}) \leq \ka_p^d(\widetilde{T}) \|\widetilde{I}\|
\leq \eta \ka_p^d(T)$.
  \end{proof}

Declaration of interest: All authors declare that they have no conflicts of interest.

\end{document}